\documentclass[11pt]{article}

\usepackage[a4paper,margin=0.6in]{geometry}
\usepackage{amsmath,amssymb,amsthm,amsfonts,mathtools,amscd}
\usepackage{mathrsfs}
\usepackage{bm}
\usepackage{enumitem}
\usepackage{hyperref}
\usepackage{color}
\usepackage{cite}
\usepackage{esint}

\hypersetup{
  colorlinks=true,
  linkcolor=blue,
  citecolor=blue,
  urlcolor=blue
}

\usepackage{setspace}
\usepackage[toc]{appendix}

\numberwithin{equation}{section}

\newtheorem{theorem}{Theorem}[section]
\newtheorem{lemma}[theorem]{Lemma}
\newtheorem{proposition}[theorem]{Proposition}

\theoremstyle{definition}
\newtheorem{definition}[theorem]{Definition}
\newtheorem{remark}[theorem]{Remark}

\newcommand{\T}{\mathbb{T}}
\newcommand{\E}{\mathbb{E}}
\newcommand{\Prob}{\mathbb{P}}
\newcommand{\dd}{\,\mathrm{d}}
\newcommand{\ep}{\varepsilon}
\newcommand{\La}{\Lambda}

\newcommand{\mc}{\mathcal}

\newcommand{\br}[1]{\left(#1\right)}

\newcommand{\norm}[1]{\left\|#1\right\|}
\newcommand{\ip}[2]{\left\langle #1,#2\right\rangle}
\newcommand{\dx}{\,\mathrm{d}x}

\theoremstyle{plain}
\newtheorem{THM}{Theorem}

\title{Mixed Third-Order Flux Laws for Dual Cascade
in the Stochastic SQG Equation}
\author{
Rongchang Liu,\;
Kening Lu,\;
Zexin Wang$^*$
\\[2mm]
{\small School of Mathematics, Sichuan University,
Chengdu, Sichuan 610064, PR China}
\\
{\small Emails: rcliu@scu.edu.cn,\;
keninglu@scu.edu.cn,\;
wangzexin@stu.scu.edu.cn}
}

\date{}

\usepackage{xcolor}

\hypersetup{
    colorlinks=true,
    linkcolor=blue,
    citecolor=blue,
    urlcolor=blue,
    pdfstartview=FitH,
    bookmarksopen=true,
    bookmarksnumbered=true
}

\begin{document}

\maketitle
\begingroup
\renewcommand{\thefootnote}{$*$}
\footnotetext{
Corresponding author.
}
\endgroup

\begingroup
\renewcommand{\thefootnote}{}
\footnotetext{
This work was supported by the National Natural Science Foundation of China
(Grant Nos.~12090010 and 12090013) and the Fundamental Research Funds for
the Central Universities.
}
\endgroup

\begin{abstract}
We study dual-cascade flux laws for the stochastic forced--dissipative
surface quasi-geostrophic (SQG) equation on a large periodic box. For
statistically stationary solutions, under a weak anomalous dissipation
assumption, we derive rigorous mixed third-order structure-function laws for
the dual cascade: a Yaglom-type law for the direct cascade of surface
potential energy (SPE) and an antisymmetrized mixed flux law for the inverse
cascade of the Hamiltonian. In particular, the inverse Hamiltonian law appears to be new even as an explicit
third-order structure-function relation. We also prove Onsager-type obstruction results
showing that sufficiently regular stationary families cannot sustain the
corresponding non-zero fluxes: $B^s_{3,\infty}$-regularity above the
Onsager threshold $1/3$ rules out the direct SPE flux, while sufficient
low-frequency Besov regularity rules out the inverse Hamiltonian flux. These
results provide a rigorous formulation of the SQG dual-cascade phenomenology
in a stochastic stationary setting.

\end{abstract}

\

\section{\textbf{Introduction}}  
\setcounter{equation}{0}

The surface quasi-geostrophic (SQG) equation, whose roots can be traced back
to the work of Blumen \cite{ref-SQG-turbulence-1978},
is a classical active-scalar model in geophysical fluid dynamics 
that describes the transport and evolution of surface potential temperature in 
rapidly rotating, stably stratified fluids. 
It has been widely used in the study of atmospheric and oceanic fronts, 
turbulent cascades, and possible finite-time singularity mechanisms. 
Although SQG is two-dimensional, its nonlocal active-scalar structure exhibits
an amplification mechanism reminiscent of vortex stretching in the 3D Euler
equations, making it an important model problem in mathematical fluid dynamics.
Accordingly, SQG has been extensively studied in works by Constantin, Majda
and Tabak \cite{ref-Majda94}, Córdoba
\cite{ref-Annals-1998}, Constantin and Wu \cite{ref-1999-SIAM}, Kiselev,
Nazarov and Volberg \cite{ref-2007-Invent}, Caffarelli and Vasseur
\cite{ref-2010-Annals}, among others.

In the statistical theory of turbulence, Kolmogorov's theory of
three-dimensional turbulence \cite{ref-K41-1,ref-K41-2,ref-K41-3} and the dual-cascade theory of Kraichnan--Leith--Batchelor
\cite{ref-1967, ref-1968, ref-1969}
for two-dimensional turbulence laid the foundation for the study of turbulent cascades and stimulated extensive subsequent developments. SQG turbulence is
also expected to exhibit a dual-cascade phenomenology, associated with its two quadratic inviscid invariants: a direct cascade of surface potential energy (SPE) toward small scales and an inverse transfer of the Hamiltonian toward large scales. While this picture has been developed in phenomenological and numerical
studies \cite{ref-SQG-turbulence-1994,ref-SQG-turbulence-1995,ref-SQG-turbulence-2017,ref-PRF-2024,ref-2025-JFM,ref-2025-JFM-Yaglom}, a rigorous derivation of the corresponding third-order flux laws for SQG has remained largely open. In particular, although spectral predictions for SQG
turbulence are available, to the best of our knowledge no explicit
third-order flux law for the inverse Hamiltonian transfer has been established. 

Motivated by this dual-cascade phenomenology, we study statistically stationary SQG turbulence in a stochastic forced--dissipative setting. Under a weak
anomalous dissipation assumption, we derive rigorous mixed third-order flux
laws for both the direct SPE cascade and the inverse Hamiltonian cascade. We also prove Onsager-type 
obstruction results showing that sufficiently 
regular stationary families cannot sustain 
the corresponding non-zero fluxes. 

More precisely, consider the stochastic SQG equation 
with small-scale viscous dissipation,
stochastic forcing, and large-scale damping
on the periodic box $\mathbb{T}_\lambda^2:=
\mathbb{R}^2/(\lambda\mathbb{Z})^2$ with size $\lambda>0$:
\begin{equation}\label{eq:(1.1)}
d\theta +( u \cdot \nabla \theta
-\nu\Delta\theta+\alpha\Lambda ^{-2\gamma}
\theta)\,dt
= \sum_{n\in\mathbb{N}} g_n^\lambda\, dW_n(t), \qquad u = \mathcal{R}^{\perp}\theta=(-\mathcal{R}_2\theta,\mathcal{R}_1\theta). 
\end{equation}
Here $\Lambda:=(-\Delta)^{\frac{1}{2}}, \gamma\geq 0$ and $\nu,\alpha> 0$.  The scalar $\theta$ denotes 
the surface potential temperature,
$u$ is the associated 
incompressible velocity field,
and $\mathcal{R}_j:=\partial_j(-\Delta)^{-\frac{1}{2}},\ j = 1, 2$ denotes the $j$-th Riesz transform.
The processes
$\{W_n\}_{n\in\mathbb{N}}$ are i.i.d. standard Brownian
motions over a filtered probability space $(\Omega,\mathcal{F},\left(\mathcal{F}_t\right),\mathbb{P})$ satisfying the usual conditions, and
$\{g^\lambda_n\}_{n\in\mathbb{N}}$
is a sequence of smooth
scalar fields with zero mean. For the inviscid SQG equation, the two quadratic invariants relevant to the
dual-cascade picture are the surface potential energy
$$
E(\theta)=\|\theta\|_{L^2}^2
$$
and the Hamiltonian
$$
H(\theta)=\langle\theta,\psi\rangle
=\|\Lambda^{-1/2}\theta\|_{L^2}^2,
$$
where $\psi=\Lambda^{-1}\theta$ denotes the stream function.
The former is one order higher than the latter, leading phenomenologically to
a direct SPE cascade and an inverse Hamiltonian transfer.

Our first main result establishes mixed third-order flux laws for the SQG dual-cascade under a weak anomalous dissipation assumption. An informal version of the result is summarized as follows. Here we denote by $\delta_h f(x) = f(x+h)-f(x)$ the increment of $f$ by $h\in\mathbb R^2$. 
\begin{THM}\label{t.061801}
Let $\{\theta^{\nu,\alpha}\}_{\nu,\alpha>0}$ be a family of statistically
stationary solutions to \eqref{eq:(1.1)}. Under a weak anomalous dissipation assumption, assuming $\nu\sim\alpha$, and taking the large-box inviscid limit
$\nu,\alpha\to0$ with $\lambda=\lambda(\nu,\alpha)\to\infty$, there exist a
dissipative scale $\ell_{\nu}\to0$ and a damping scale $\ell_\alpha\to\infty$
such that
$$S_E(\ell)
\to -2\varepsilon\ell,\ \ \ell_\nu\ll \ell\ll 1,$$
and 
$$S_H(\ell)
\to\eta\ell,\ \ 1\ll \ell\ll\ell_\alpha,$$
where $\varepsilon$ and $\eta$ denote the averaged input rates of SPE and
Hamiltonian, respectively, and the quantity 
$$
S_E(\ell) := \mathbb{E} 
\fint_{\mathbb{S}^1}\fint_
{\mathbb{T}_\lambda^2}  
|\delta_{\ell \hat{n}} \theta|^2 
(\delta_{\ell \hat{n}} u \cdot \hat{n} ) \, dx 
\, dS(\hat{n})$$
is the Yaglom-type mixed 
structure function associated with the direct SPE cascade, while 
$$S_H(\ell) := \mathbb{E} 
\fint_{\mathbb{S}^1}
\fint_
{\mathbb{T}_\lambda^2} \theta(x)
  \Bigl[ \psi(x + \ell \hat{n})
   - \psi(x - \ell \hat{n})\Bigr]
 (u(x) \cdot \hat{n}) \, dx \, dS(\hat{n})$$
is the antisymmetrized mixed structure function associated 
with the inverse Hamiltonian cascade. 
\end{THM}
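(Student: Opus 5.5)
We follow the Bedrossian--Coti Zelati--Punshon-Smith--Weber approach for stochastic Navier--Stokes, adapted to the two SQG invariants: write a Kármán--Howarth--Monin (KHM) relation for two-point correlations, then pass to the inertial ranges using the weak anomalous dissipation assumption.

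\textbf{Step 1: balance laws.} Since the noise is additive and the solution is stationary, Itô's formula applied to $\|\theta\|_{L^2}^2$ and to $\|\Lambda^{-1/2}\theta\|_{L^2}^2$ gives
\begin{align*}
\nu\,\mathbb{E}\|\nabla\theta\|_{L^2}^2+\alpha\,\mathbb{E}\|\Lambda^{-\gamma}\theta\|_{L^2}^2&=\varepsilon,\\
\nu\,\mathbb{E}\|\Lambda^{1/2}\theta\|_{L^2}^2+\alpha\,\mathbb{E}\|\Lambda^{-1/2-\gamma}\theta\|_{L^2}^2&=\eta .
\end{align*}
Here $\varepsilon$ and $\eta$ are the normalized traces $\frac12\sum_n\|g_n^\lambda\|_{L^2}^2$ and $\frac12\sum_n\|\Lambda^{-1/2}g_n^\lambda\|_{L^2}^2$, per unit area. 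The weak anomalous dissipation assumption is taken to mean that
\[
\alpha\,\mathbb{E}\fint|\Lambda^{-\gamma}\theta|^2\to0
\quad\text{and}\quad
\nu\,\mathbb{E}\fint|\Lambda^{1/2}\theta|^2\to0 .
\]
In words, SPE leaves only through the viscous dissipation and the Hamiltonian only through the large-scale damping.

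\textbf{Step 2: KHM relations.} Apply Itô's formula to the two-point correlations
\[
\Gamma_E(h)=\mathbb{E}\fint\theta(x)\theta(x+h)\dx,\qquad
\Gamma_H(h)=\mathbb{E}\fint\theta(x)\psi(x+h)\dx .
\]
Stationarity kills the time derivatives. The transport term becomes a divergence in $h$, using $\nabla\cdot u=0$ and translation invariance:
\begin{itemize}[label={}]
\item For SPE: $-\nabla_h\cdot\mathbb{E}\fint \theta(x)\theta(x+h)\,\delta_h u\,\dx$. Using $\delta_h|\theta|^2$ symmetry, this equals $\frac12\nabla_h\cdot D_E(h)$ with $D_E(h)=\mathbb{E}\fint|\delta_h\theta|^2\delta_h u\,\dx$, as in Yaglom's law.
\item For the Hamiltonian: the nonlinear term is $\mathbb{E}\fint (u\cdot\nabla\theta)(x)\psi(x+h)+\theta(x)(u\cdot\nabla\theta)(x+h)$ after using self-adjointness of $\Lambda^{-1}$. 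Integrating by parts, symmetrizing in $h\mapsto-h$, and using the oddness of $\mathcal{R}^\perp$ (so that $u\cdot\nabla\psi=0$ pointwise), this becomes $\nabla_h\cdot\mathbb{E}\fint\theta(x)\psi(x+h)u(x)\dx$ up to antisymmetrization. That is precisely the structure appearing in $S_H$.
\end{itemize}
The results are identities of the form
\begin{align*}
\tfrac12\nabla_h\cdot D_E(h)&=-2\nu\Delta_h\Gamma_E+2\alpha\Lambda_h^{-2\gamma}\Gamma_E-a(h),\\
\nabla_h\cdot D_H(h)&=-2\nu\Delta_h\Gamma_H+2\alpha\Lambda_h^{-2\gamma}\Gamma_H-b(h),
\end{align*}
where $a(h)$ and $b(h)$ are the forcing correlations, with $a(0)=2\varepsilon$ and $b(0)=2\eta$.

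\textbf{Step 3: spherical averaging.} Integrate both identities over the ball $B_\ell$ and use the divergence theorem, which turns $\int_{B_\ell}\nabla\cdot D$ into $2\pi\ell\,\fint_{\mathbb{S}^1}D(\ell\hat n)\cdot\hat n$. This yields
\begin{align*}
S_E(\ell)&=\frac{2}{\pi\ell}\int_{B_\ell}(\cdots)\,dh,\\
S_H(\ell)&\approx\frac{1}{2\pi\ell}\int_{B_\ell}(\cdots)\,dh .
\end{align*}

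\textbf{Step 4: limits in each range.}
\begin{itemize}[label={}]
\item \emph{Direct cascade} ($\ell_\nu\ll\ell\ll1$): forcing smoothness gives $a(h)\to 2\varepsilon$ uniformly for $|h|\le\ell\ll1$, which produces the term $-2\varepsilon\ell$. The damping term is bounded by $\alpha\,\mathbb{E}\|\Lambda^{-\gamma}\theta\|^2\,\ell\to0$. The viscous term $\nu\,\partial_\ell\fint\Gamma_E$ is handled by writing $\nu\nabla\Gamma_E(h)$ as an integral of $\nu\,\mathbb{E}|\nabla\theta|^2$ against increments. This gives the bound $\lesssim\nu\,\ell^{-1}\mathbb{E}\|\theta\|_{L^2}^2$. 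It is negligible once $\ell\gg\ell_\nu:=\sqrt{\nu\,\mathbb{E}\|\theta\|^2/\varepsilon}$, which tends to $0$ because $\alpha\sim\nu$ controls $\mathbb{E}\|\theta\|^2$ (for $\gamma=0$; otherwise use the balance).
\item \emph{Inverse cascade} ($1\ll\ell\ll\ell_\alpha$): the forcing correlation $b$ is integrable in $h$ and has large-box normalization. Over $B_\ell$ with $\ell\gg1$, its contribution is the full-space integral, which yields $\eta\ell$ with the appropriate sign convention. The viscous term is $\nu\Delta_h\Gamma_H$, and by the divergence theorem it is bounded by the vanishing dissipation $\nu\,\mathbb{E}\|\Lambda^{1/2}\theta\|^2$. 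The damping term $\alpha\Lambda_h^{-2\gamma}\Gamma_H$ integrated over $B_\ell$ is bounded via Cauchy--Schwarz by $\alpha\,\ell^{1+2\gamma}$ times a Hamiltonian-type norm. It vanishes for $\ell\ll\ell_\alpha$, where $\ell_\alpha\to\infty$ is defined by this balance.
\end{itemize}

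\textbf{Main obstacle.} The hardest part is the inverse range. One must justify that $\Gamma_H$ and its Laplacian are well defined on the growing torus, since the mean is zero but $\Lambda^{-1}$ is a low-frequency singular operator. One must also control the damping and forcing terms uniformly as $\lambda\to\infty$, with $\ell\ll\ell_\alpha\ll\lambda$. I would first try to choose $\lambda(\nu,\alpha)$ so that $\ell_\alpha/\lambda\to0$, and bound the damping term using the $\alpha$-weighted Hamiltonian dissipation from the balance law.
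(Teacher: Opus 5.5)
\textbf{There is a genuine gap in the inverse-cascade part.} Your direct-cascade argument is essentially the paper's: KHM identity, ball average, $Q(h)\to Q(0)=\varepsilon$ uniformly in $\lambda$, and the damping term killed by W.A.D. Two small corrections are needed there. First, after dividing by $\ell$, the viscous term is bounded by $\ell^{-1}(\nu\mathbb{E}\|\nabla\theta\|_{L^2_\lambda}^2)^{1/2}(\nu\mathbb{E}\|\theta\|_{L^2_\lambda}^2)^{1/2}$, not by $\nu\ell^{-1}\mathbb{E}\|\theta\|^2$; it gives the same threshold $\ell_\nu$. Second, for $\gamma>0$, showing $\nu\mathbb{E}\|\theta\|_{L^2_\lambda}^2\to0$ requires interpolating $L^2$ between $\dot H^1$ and $\dot H^{-\gamma}$ together with $\nu\sim\alpha$; the balance law alone does not give it.

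In the inverse cascade you have interchanged the roles of forcing and damping. The forcing term cannot produce $\eta\ell$:
\begin{itemize}
\item Its contribution to $S_H(\ell)$ is $-\frac{1}{2\pi\ell}\int_{|h|\le\ell}b(h)\,dh$. This carries a minus sign, and is $\approx-\eta\ell$ only for $\ell\ll1$.
\item If $b$ were integrable, as you claim, this term would be $O(\ell^{-1})$, not linear in $\ell$.
\item In the paper this term is shown to be $o(\ell)$. With $f_n^\lambda=\Lambda^{-1/2}g_n^\lambda$ one has $\frac{1}{\pi\ell^2}\int_{|h|\le\ell}K(h)\,dh=\frac12\sum_n\sum_k\Phi_\ell(k)|\widehat{f_n^\lambda}(k)|^2$, where $\Phi_\ell(k)=2J_1(\ell|k|)/(\ell|k|)$ satisfies $|\Phi_\ell(k)|\lesssim\min\{1,(\ell|k|)^{-3/2}\}$.
\item The low-frequency condition \eqref{eq:(Low F)} then makes this vanish uniformly for $\ell\ge\ell_I\to\infty$. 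That is exactly what the hypothesis is for.
\end{itemize}

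Conversely, the damping term does not vanish; it is the source of $+\eta\ell$. By the Hamiltonian balance and W.A.D., $\alpha B(0)=\alpha\mathbb{E}\|\Lambda^{-\gamma-1/2}\theta\|_{L^2_\lambda}^2=\eta-\nu\mathbb{E}\|\Lambda^{1/2}\theta\|_{L^2_\lambda}^2\to\eta$. This matches your own Step 1 statement that the Hamiltonian leaves only through damping, so your bound $\alpha\ell^{1+2\gamma}\times(\cdots)$ making it negligible contradicts the balance law.

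The missing ingredients are:
\begin{itemize}
\item The Lipschitz estimate $|B(h)-B(0)|\le|h|\,(\mathbb{E}\|\Lambda^{-\gamma+1/2}\theta\|_{L^2_\lambda}^2)^{1/2}(\mathbb{E}\|\Lambda^{-\gamma-1/2}\theta\|_{L^2_\lambda}^2)^{1/2}$.
\item The limit $\alpha\mathbb{E}\|\Lambda^{-\gamma+1/2}\theta\|_{L^2_\lambda}^2\to0$, obtained by interpolating between $\dot H^{1/2}$ and $\dot H^{-\gamma-1/2}$ using $\nu\sim\alpha$ and W.A.D.
\end{itemize}
The damping scale is then the scale below which the damping correlation stays close to its value at $h=0$: $\ell_\alpha^2\,\alpha\mathbb{E}\|\Lambda^{-\gamma+1/2}\theta\|_{L^2_\lambda}^2\to0$, with $\ell_\alpha/\lambda\to0$. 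It is not a scale below which damping is negligible.
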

We refer the reader to Theorem \ref{Thm} for the precise statement. Theorem \ref{t.061801} turns the SQG cascade picture into precise flux laws.
For the direct SPE cascade, Yaglom-type mixed third-order relations have appeared
in phenomenological discussions and numerical studies
\cite{ref-PRF-2024, ref-2025-JFM-Yaglom},
but they have not been established in a mathematically rigorous framework.
More notably, for the inverse Hamiltonian
cascade, there has been no explicit SQG third-order structure-function
law even at the phenomenological or numerical level. 
Our work fills this gap by rigorously establishing the direct 
Yaglom-type law and deriving
a precise antisymmetrized mixed third-order flux law for the 
inverse Hamiltonian transfer.

A distinctive feature of the inverse Hamiltonian law is that the scale separation appears only in the stream function, unlike the inverse energy law in two-dimensional Navier--Stokes turbulence, where every factor is incremented. This reflects the nonlocal Hamiltonian structure of SQG: the stream function carries the large-scale, low-frequency response of the transported scalar, whereas the scalar and the advecting velocity enter locally at the base point. 

These dual flux laws are formally consistent with the KLB-type spectral
predictions proposed in the early works of Blumen
\cite{ref-SQG-turbulence-1978}, Pierrehumbert, Held and Swanson
\cite{ref-SQG-turbulence-1994}, and Held et al.
\cite{ref-SQG-turbulence-1995}. In terms of the isotropic shell spectra, these predictions read: 
$$|k|\mathbb E|\widehat\theta(k)|^2
\sim\varepsilon^{2/3}|k\vert^{-5/3},
\quad 1\ll|k\vert\ll\ell_\nu^{-1},$$
$$|k|\mathbb E\bigl(|k|^{-1}|\widehat\theta(k)|^2\bigr)\sim\eta^{2/3}|k\vert^{-2},
\quad \ell_\alpha^{-1}\ll|k\vert\ll1,$$
where $k$ is the frequency and $\widehat\theta$ is the Fourier transform of $\theta$.

In a complementary direction, we also prove Onsager-type obstruction results,
which provide necessary regularity conditions for non-trivial third-order flux
laws. Informally, a non-zero direct SPE flux requires roughness at the Onsager
scale $1/3$, while a non-zero inverse Hamiltonian flux requires critical
large-scale, low-frequency accumulation. 

\begin{THM}\label{t.061802}
Let $\{\theta^{\nu,\alpha}\}_{\nu,\alpha>0}$ be a family of statistically
stationary solutions to \eqref{eq:(1.1)}, considered in the large-box inviscid
limit $\nu,\alpha\to0$ with $\lambda=\lambda(\nu,\alpha)\to\infty$. The
following obstruction results hold. 

\begin{enumerate}[label=\emph{(\roman*)}]
\item \emph{(Direct SPE flux)} If the family is uniformly bounded in
$L^3(\Omega;B^s_{3,\infty})$ for some $s>1/3$, then the direct SPE flux
vanishes:
$$
\frac{S_E(\ell)}{\ell}\to0
$$
uniformly in the direct-cascade range $\ell_\nu\ll\ell\ll1$. Consequently,
no non-trivial direct SPE flux law can hold along such a uniformly regular
family.

\item \emph{(Inverse Hamiltonian flux)} If the low-frequency part of the family
is uniformly better than the critical $\dot B^0_{3,\infty}$ behavior, then the
inverse Hamiltonian flux vanishes:
$$
\frac{S_H(\ell)}{\ell}\to0
$$
uniformly in the inverse-cascade range $1\ll\ell\ll\ell_\alpha\ll\lambda$. Consequently,
no non-trivial inverse Hamiltonian flux law can hold along such a uniformly
low-frequency regular family.
\end{enumerate}
\end{THM}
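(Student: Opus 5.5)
Both parts reduce to bounding the structure functions by the assumed Besov norms using Hölder's inequality, stationarity and translation invariance. Neither argument uses the flux law of Theorem \ref{t.061801}. Throughout, we use that the Riesz transforms are bounded on $L^3(\mathbb{T}^2_\lambda)$, with constants independent of $\lambda$ by transference from $\mathbb{R}^2$. They commute with translations, so $\|\delta_h u\|_{L^3}\lesssim\|\delta_h\theta\|_{L^3}$ uniformly in the box size.

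\textbf{Part (i).} Fix a direction $\hat n$. By Hölder's inequality in $(\omega,x)$ with exponents $(3/2,3)$,
$$
\Bigl|\mathbb{E}\fint|\delta_{\ell\hat n}\theta|^2\,\delta_{\ell\hat n}u\cdot\hat n\,dx\Bigr|\le \bigl(\mathbb{E}\fint|\delta_{\ell\hat n}\theta|^3\bigr)^{2/3}\bigl(\mathbb{E}\fint|\delta_{\ell\hat n}u|^3\bigr)^{1/3}\lesssim \mathbb{E}\fint|\delta_{\ell\hat n}\theta|^3\,dx .
$$
The uniform bound in $L^3(\Omega;B^s_{3,\infty})$ gives $\|\delta_h\theta\|_{L^3}\le C|h|^s\|\theta\|_{B^s_{3,\infty}}$ for $|h|\le1$, with $s<1$ without loss of generality. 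Care is needed with the normalization: $\fint$ is $\lambda^{-2}\int$, and the Besov norm is taken with this averaged measure, so the constant is $\lambda$-uniform. Averaging over $\hat n$ gives $|S_E(\ell)|\le C\ell^{3s}$ uniformly in $\nu,\alpha$. Since $3s>1$, we get $\sup_{\ell_\nu\le\ell\le\ell_0}|S_E(\ell)|/\ell\le C\ell_0^{3s-1}\to0$ as $\ell_0\to0$. This is exactly the claimed uniform vanishing in the direct-cascade range.

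\textbf{Part (ii).} Write $\delta^{\pm}_\ell\psi:=\psi(\cdot+\ell\hat n)-\psi(\cdot-\ell\hat n)=\delta_{2\ell\hat n}\psi(\cdot-\ell\hat n)$. This is a difference of the stream function $\psi=\Lambda^{-1}\theta$ at scale $2\ell\gg1$. I would interpret the low-frequency hypothesis quantitatively as
$$
\sup_{\nu,\alpha}\,\sup_{1\le|h|\le\lambda}|h|^{-\sigma}\bigl(\mathbb{E}\|\delta_h\theta\|_{L^3}^3\bigr)^{1/3}\le C\quad\text{for some }\sigma<0,
$$
together with a uniform $L^3(\Omega;L^3)$ bound on $\theta$. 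This is the scale-space form of being strictly better than $\dot B^0_{3,\infty}$ at low frequencies.

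The plan is to decompose $\theta=P_{\le1}\theta+P_{>1}\theta$ with a Littlewood--Paley cutoff. For the high-frequency piece, $\Lambda^{-1}P_{>1}$ is bounded on $L^3$, so its contribution to $\|\delta^\pm_\ell\psi\|_{L^3}$ is $O(1)$ and, after Hölder, to $S_H(\ell)$ is $O(1)$. Dividing by $\ell$, this is $O(\ell^{-1})\to0$ for $\ell\gg1$. For the low-frequency piece, I would write $\Lambda^{-1}P_{\le1}\theta=\sum_{2^j\le1}\Lambda^{-1}P_j\theta$ and use, for each block,
$$
\|\delta_{2\ell\hat n}\Lambda^{-1}P_j\theta\|_{L^3}\lesssim\min(\ell,2^{-j})\,\|P_j\theta\|_{L^3}.
$$
The low-frequency hypothesis gives $\|P_j\theta\|_{L^3}\lesssim 2^{-j\sigma}$ with $-\sigma>0$. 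Summing, the blocks with $2^{-j}\le\ell$ contribute $\sum 2^{-j(1+\sigma)}\sim\ell^{1+\sigma}$. The blocks with $2^{-j}>\ell$ contribute $\ell\sum 2^{-j\sigma}\sim\ell\cdot\ell^{\sigma}$. Altogether $\|\delta^\pm_\ell\psi\|\lesssim\ell^{1+\sigma}$.

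Then Hölder's inequality, $\|u\|_{L^3}\lesssim\|\theta\|_{L^3}$, and the uniform $L^3$ bound give $|S_H(\ell)|/\ell\lesssim\ell^{\sigma}+\ell^{-1}\to0$ as $\ell\to\infty$. This bound is uniform over $1\ll\ell\ll\ell_\alpha\ll\lambda$. On the torus the dyadic sum is truncated at $2^j\sim\lambda^{-1}$, which only helps.

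\textbf{Main obstacle.} The main difficulty is Part (ii): making precise the low-frequency hypothesis and its $\lambda$-uniformity. The lattice modes near $|k|\sim\lambda^{-1}$ and the mean-zero condition must be handled so that $\Lambda^{-1}$ is well defined. Also, the Bernstein-type bound $\|\delta_h\Lambda^{-1}P_j f\|_{L^3}\lesssim\min(|h|,2^{-j})\|P_jf\|_{L^3}$ must hold with constants independent of $\lambda$. I would justify this by periodizing $\mathbb{R}^2$ kernels: the kernel of $\Lambda^{-1}P_j$ is an $L^1$-normalized rescaling, with $L^1$ norm $\lesssim2^{-j}$ and gradient $L^1$ norm $\lesssim1$. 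Young's inequality on the torus then gives the bound uniformly in $\lambda$.
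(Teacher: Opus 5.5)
Your part (i) is the paper's argument. In part (ii), the dyadic computation $\|\delta_{2\ell\hat n}\Lambda^{-1}P_j\theta\|_{L^3_\lambda}\lesssim\min(\ell,2^{-j})\|P_j\theta\|_{L^3_\lambda}$, summed over $2^{-j}\le\ell$, over $2^{-j}>\ell$ and over the high frequencies, is exactly the paper's large-scale increment lemma for $\psi$. The problem is the hypothesis you feed into it.

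You assert that the condition $\sup_{1\le|h|\le\lambda}|h|^{-\sigma}\bigl(\mathbb E\|\delta_h\theta\|_{L^3_\lambda}^3\bigr)^{1/3}\le C$ with $\sigma<0$ is ``the scale-space form'' of being better than $\dot B^0_{3,\infty}$ at low frequencies. This is false. Differences characterize $\dot B^{\sigma}_{p,\infty}$ only for $0<\sigma<1$, and for a mean-zero field the large-scale increments saturate rather than decay. Concretely, use $\fint_{\mathbb T^2_\lambda}\|\delta_h\theta\|_{L^2_\lambda}^2\,dh=2\|\theta\|_{L^2_\lambda}^2$ (zero mean) and $\|\cdot\|_{L^2_\lambda}\le\|\cdot\|_{L^3_\lambda}$. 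Your assumption then gives
\[
2\,\mathbb E\|\theta\|_{L^2_\lambda}^2=\fint_{\mathbb T^2_\lambda}\mathbb E\|\delta_h\theta\|_{L^2_\lambda}^2\,dh\le \frac{4\pi}{\lambda^{2}}\,\mathbb E\|\theta\|_{L^2_\lambda}^2+\frac{C^2}{\lambda^{2}}\int_{1\le|h|\lesssim\lambda}|h|^{2\sigma}\,dh ,
\]
and the last term is $O(\lambda^{2\sigma}+\lambda^{-2}\log\lambda)\to0$. So your hypothesis forces the stationary family to vanish in normalized $L^2$ in the large-box limit. For example, a family living at frequencies $|k|\sim1$ with $\mathbb E\|\theta\|_{L^2_\lambda}^2\gtrsim1$ violates it, even though the paper's low-frequency seminorm of such a family is trivially bounded. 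What you prove is therefore a nearly vacuous obstruction, not the intended one.

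Separately, the step ``the hypothesis gives $\|P_j\theta\|_{L^3}\lesssim2^{-j\sigma}$'' is only asserted. It can be derived from the vanishing mean of the kernel of $P_j$, but the periodization at $2^{-j}\sim\lambda$ needs an argument.

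The fix is to assume the block decay directly, as the paper does: $\|\theta\|^{\mathrm{low}}_{\dot B^{-s}_{3,\infty}}=\sup_{j\le0}2^{-sj}\|\dot\Delta_j\theta\|_{L^3_\lambda}$ with $s=-\sigma\in(0,1)$. Your summation then gives, pathwise, $\|\delta_{2\ell\hat n}\psi\|_{L^3_\lambda}\lesssim\ell^{1-s}\bigl(\|\theta\|_{L^3_\lambda}+\|\theta\|^{\mathrm{low}}_{\dot B^{-s}_{3,\infty}}\bigr)$ for $\ell\ge1$, and hence $|S_H(\ell)|/\ell\lesssim\ell^{-s}$.

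The paper applies this bound pathwise before taking expectations. That is why its hypothesis only involves the mixed moment $\mathbb E\bigl[\|\theta\|_{L^3_\lambda}^2\bigl(\|\theta\|_{L^3_\lambda}+\|\theta\|^{\mathrm{low}}_{\dot B^{-s}_{3,\infty}}\bigr)\bigr]$. Your H\"older-in-$(\omega,x)$ route would instead require a third moment of the low-frequency seminorm.
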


We refer the reader to Theorem \ref{Onsager I} and Theorem \ref{Onsager II}
for the precise statements. These obstruction results are naturally connected
with the Onsager-type conservation theory for the inviscid SQG equation. For
the SPE, the Onsager threshold is $1/3$: regularity above this threshold
implies conservation
\cite{ref-Onsager-Zhou2005,ref-Onsager-Chae2006,ref-Onsager-AkramovWiedemann2019},
with endpoint refinements in \cite{ref-Onsager-WangYeYu2022}; in the present
setting, such regularity also suppresses any non-trivial direct SPE flux. For
the Hamiltonian, the sharp Onsager threshold is instead $0$
\cite{ref-Onsager-BuckmasterShkollerVicol2019,ref-Onsager-IsettMa2021,ref-Onsager-DaiGiriRadu2024},
and our inverse obstruction shows that any non-trivial inverse Hamiltonian flux
requires critical large-scale low-frequency accumulation. Recent deterministic works of De Rosa–Latocca–Park \cite{de2025global} and De Rosa–Yuzbasioglu \cite{ref-DeRosaYuzbasioglu26}
study Hamiltonian conservation and the absence of anomalous
Hamiltonian dissipation for vanishing viscosity limits of SQG and gSQG, showing that suitable
compactness in the Hamiltonian topology rules out viscous Hamiltonian anomaly below the Onsager-type
threshold. This is complementary to our statistically stationary setting, where the Hamiltonian part of
weak anomalous dissipation assumes the absence of such viscous anomaly and the inverse flux law identifies the corresponding
large-scale transfer.

Stochastic SQG equations closely related to \eqref{eq:(1.1)} have also been studied
extensively from the viewpoint of stochastic PDE and Markov dynamics \cite{ref-Zhu-2015-AOP, ref-Zhu-2013, ref-2019, ref-2021}. In
particular, R\"ockner, Zhu and Zhu \cite{ref-Zhu-2015-AOP} established
well-posedness and ergodic properties for stochastic dissipative SQG equations
in the subcritical regime. The corresponding invariant measures provide
canonical statistically stationary states for the forced--dissipative SQG
dynamics. From this perspective, Theorems \ref{t.061801} and \ref{t.061802}
describe structural information on such invariant measures in the large-box
inviscid limit, under the weak anomalous dissipation assumption.

Our results are also in line with a recent rigorous program for turbulent
scaling laws. In passive scalar turbulence \cite{bedrossian2022lagrangian}, mixing and Lagrangian chaos have
been used to verify anomalous dissipation and to derive scaling laws directly
from the underlying stochastic dynamics. For the Navier--Stokes equations,
Bedrossian, Coti Zelati, Punshon-Smith and Weber established sufficient
conditions for the Kolmogorov (4/5)-law in three dimensions
\cite{ref-3D-NS-2019} and for the dual-cascade flux laws in stochastic
two-dimensional Navier--Stokes equations \cite{ref-2D-NS-dual}. The present
work contributes to this direction from the SQG side: under W.A.D., it gives
a rigorous formulation of the SQG dual-cascade flux laws, identifies the
relevant mixed third-order quantities, and clarifies the regularity mechanisms
compatible with non-zero SPE and Hamiltonian fluxes. In this sense, it
provides a rigorous target for future attempts to derive SQG turbulent flux
laws directly from the dynamics.

The rest of the paper is organized as follows. 
In Section \ref{2}, we introduce the notation, set up the forced--dissipative
SQG framework, derive the stationary balance laws, 
state the W.A.D. assumption, and present the main results. 
In Section \ref{3}, we derive the SPE Kármán--Howarth--Monin
(KHM) relation and prove the flux law for the direct SPE cascade. 
In Section \ref{4}, we establish the Hamiltonian 
KHM relation and prove the flux law for the inverse Hamiltonian cascade. 
In Section \ref{5}, we prove the Onsager-type obstruction results.

\section{Settings and main results} 
\setcounter{equation}{0}
\label{2}
In this section, we first introduce the necessary notation and technical settings, and then state our main results in detail.

\subsection{Notations and conventions}\label{notations}

\ \ \ \ Throughout the paper,
the scalar fields $\theta(t,x)$ and 
$\{g_n^\lambda(x)\}_{n\in\mathbb N}$
are both assumed 
to have zero spatial mean on 
$\mathbb{T}_\lambda^2$:
$$\int_{\mathbb{T}_\lambda^2}
\theta \,dx=0,\ 
\int_{\mathbb{T}_\lambda^2}
g_n^\lambda \,dx=0.$$

We use the notation $a\lesssim b$ 
to represent $a\leq Cb$ for some constant
$C$,
and $a\gtrsim b$ is defined analogously. 
We write $a\sim b$ if $a$ and
 $b$ are of the same order, i.e.,
$C_1a\leq b\leq C_2a$ for some 
constants $C_1,C_2>0$.

The spatial domain considered in 
this paper is 
$\mathbb{T}^2_\lambda=\mathbb{R}^2/(\lambda\mathbb{Z})^2$, and 
we use $\|f\Vert_{L_\lambda^2}:=
\big(\fint_{\mathbb{T}_\lambda^2}| f \vert^2\,dx\big)^{\frac{1}{2}}$
 to represent the average $L^2$-norm
of $f$ on the torus $\mathbb{T}_\lambda^2$, where 
$\fint_A f \,dx
:=\frac{1}{|A\vert}\int_{A}
f \,dx.$ 
We also use $dS(\hat n)$ to
denote the 
arclength measure on the 
unit circle $\mathbb S^1$.
For $h\in\mathbb{R}^2$,
we use the increment notation
$\delta_hf(x):=f(x+h)-f(x).$

On the torus $\mathbb T^2_\lambda=\mathbb R^2/(\lambda\mathbb Z)^2$,
the frequency lattice is
$
\mathbb Z^2_\lambda:=\frac{2\pi}{\lambda}\mathbb Z^2 .
$
For a periodic function $f$, we use the Fourier convention
$$
f(x)=\sum_{k\in\mathbb Z^2_\lambda}\widehat 
f(k)e^{ik\cdot x},
\qquad
\widehat f(k)
=
\fint_{\mathbb T^2_\lambda}
f(x)e^{-ik\cdot x}\,dx ,
$$
and for $\delta>0$, we use $f_{\le\delta}$ to denote
the 
low-frequency Fourier cutoff
$$
f_{\le\delta}(x)
:=
\sum_{\substack{k\in\mathbb Z^2_\lambda:|k|\le\delta}}
\widehat f(k)e^{ik\cdot x}.
$$

Fourier multipliers are defined with respect to this frequency lattice. In particular,
$\Lambda^s=(-\Delta)^{s/2}$ is given by
$
\widehat{\Lambda^s f}(k)=|k|^s\widehat f(k),
$
and the Riesz transforms are defined by
$
\widehat{R_j f}(k)
=
i\frac{k_j}{|k|}\widehat f(k),\ 
k\neq0 .
$
All negative-order multipliers are understood on 
zero-mean functions, so that the zero
mode is absent.

We will use Littlewood--Paley decomposition on $\mathbb T^2_\lambda$.  Let $\chi$ and $\varphi$ be standard smooth cutoffs such
that $\chi$ is supported near the origin, $\varphi$ is supported in an annulus,
and
$$
\chi(r)+\sum_{j\ge 0}\varphi(2^{-j}r)=1,\qquad r\ge 0.
$$
We denote by $\Delta_j f$, $j\ge0$,  the non-homogeneous dyadic blocks, and by
$\Delta_{-1}f$ the low-frequency block. More explicitly,
$$
f=\Delta_{-1}f+\sum_{j\ge0}\Delta_j f,
$$
where
$$
\Delta_j f(x)
=
\sum_{k\in\mathbb Z_\lambda^2}
\varphi(2^{-j}|k|)\widehat f(k)e^{ik\cdot x},
\qquad j\ge0,
$$
and
$$
\Delta_{-1}f(x)
=
\sum_{k\in\mathbb Z_\lambda^2}
\chi(|k|)\widehat f(k)e^{ik\cdot x}.
$$
For $s\in\mathbb R$ and $1\le p,q<\infty$, the non-homogeneous Besov norm is
defined by
$$
\|f\|_{B^s_{p,q}}
=
\|\Delta_{-1}f\|_{L^p_\lambda}
+
\left(
\sum_{j\ge0}
\bigl(2^{js}\|\Delta_j f\|_{L^p_\lambda}\bigr)^q
\right)^{1/q}.
$$
When $q=\infty$, this is understood as
$$
\|f\|_{B^s_{p,\infty}}
=
\|\Delta_{-1}f\|_{L^p_\lambda}
+
\sup_{j\ge0}
2^{js}\|\Delta_j f\|_{L^p_\lambda}.
$$

We also use the homogeneous Littlewood--Paley decomposition on the zero-mean subspace. 
Choose the annular cutoff $\varphi$ so that
$$
\sum_{j\in\mathbb Z}\varphi(2^{-j}r)=1,\qquad r>0.
$$
Since all functions under consideration have zero spatial mean, the zero Fourier mode is absent. Hence, for every zero-mean periodic distribution $f$, we have the homogeneous decomposition
$$
f=\sum_{j\in\mathbb Z}\dot\Delta_j f
$$
in the sense of distributions, where the homogeneous dyadic blocks are defined by
$$
\dot\Delta_j f(x)
=
\sum_{k\in\mathbb Z_\lambda^2\setminus\{0\}}
\varphi(2^{-j}|k|)\widehat f(k)e^{ik\cdot x}.
$$

For $s\in\mathbb R$ and $1\le p,q<\infty$, the homogeneous Besov seminorm is
defined by
$$
\|f\|_{\dot B^s_{p,q}}
=
\left(
\sum_{j\in\mathbb Z}
\bigl(2^{js}\|\dot\Delta_j f\|_{L^p_\lambda}\bigr)^q
\right)^{1/q}.
$$
When $q=\infty$, this becomes
$$
\|f\|_{\dot B^s_{p,\infty}}
=
\sup_{j\in\mathbb Z}
2^{js}\|\dot\Delta_j f\|_{L^p_\lambda}.
$$

For positive regularity, the case $q=\infty$ also admits an equivalent
difference characterization. In particular, for $0<s<1$,
$$
\|f\|_{B^s_{p,\infty}}
\sim
\|f\|_{L^p_\lambda}
+
\sup_{0<|h|\le1}
\frac{\|\delta_h f\|_{L^p_\lambda}}{|h|^s},
\quad
\|f\|_{\dot{B}_{p,\infty}^s}\sim \sup_{h \neq 0} \frac{
    \|\delta_h f\|_{L^p_\lambda}}{|h|^s}.
$$
Thus $B^s_{p,\infty}$ regularity controls 
increments of order $|h|^s$:
$\|\delta_h f\|_{L^p_\lambda}\lesssim
\|f\|_{B^s_{p,\infty}} |h|^s,\  0<|h|\le1.$

We will also use the following Besov seminorm to characterize the Onsager-type obstruction for the inverse Hamiltonian cascade. 
\begin{definition}[Low-frequency Besov seminorm]
For $s>0$, we define the
low-frequency Besov seminorm
$$
\|f\|^{\mathrm{low}}_{\dot B^{-s}_{3,\infty}(\mathbb T^2_\lambda)}
:=
\sup_{j\leq 0}
2^{-s j}
\|\dot\Delta_j f\|_{L^3_\lambda}.$$
\end{definition}
Equivalently, the condition $\|f\|^{\mathrm{low}}_{\dot B^{-s}_{3,\infty}(\mathbb T^2_\lambda)}<\infty$ says that the low-frequency blocks satisfy
$$
\|\dot\Delta_j f\|_{L^3_\lambda}
\lesssim 2^{s j},
\qquad j\leq 0.
$$
In terms of the large spatial scale $L\sim 2^{-j}$, this corresponds to the decay
$$
\|\dot\Delta_L f\|_{L^3_\lambda}
\lesssim L^{-s},
\qquad L\gg 1,
$$
where $\dot\Delta_L$ denotes the homogeneous dyadic block localized at frequencies $|k|\sim L^{-1}$. 
Thus $\|\cdot\|^{\mathrm{low}}_{\dot B^{-s}_{3,\infty}}$ measures 
low-frequency regularity
in the reciprocal scale $L^{-1}$.

\subsection{ The forced--dissipative SQG setting 
and stationary solutions}\label{2.2 Stationary}
\ \ \ \ In this subsection, we specify the 
forced--dissipative framework used throughout the paper,
together with the assumptions on the forcing 
and the existence of stationary solutions. The forced--dissipative SQG model \eqref{eq:(1.1)} 
provides a 
natural setting for studying statistically 
stationary turbulent states: the stochastic 
forcing continuously injects fluctuations 
into the system, the small-scale viscosity 
$-\nu\Delta\theta$ dissipates high-frequency 
structures, while 
the large-scale Ekman-type damping
$\alpha\Lambda^{-2\gamma}\theta$
acts as a low-frequency sink for the Hamiltonian
that is transferred 
toward large scales, thereby preventing its 
accumulation near the lowest modes.
Such a forced--dissipative SQG model with stochastic 
forcing has also been studied in some physics 
literature;
see \cite{ref-2025-JFM, ref-PRF-2024, ref-2025-JFM-Yaglom, ref-2024-PhD}.

We now assume the following uniform 
regularity and low-frequency conditions on the
forcing fields $\{g_n^\lambda(x)\}_{n\in\mathbb N}$:
\begin{equation}
    \sup_{\lambda\ge1}\sum_n
\|\Lambda^{1+\sigma}g^\lambda_n\|_{L^2_\lambda}^2<\infty,\ 
\forall\sigma>0,
\label{eq:(regularity for noise)}
\end{equation}
\begin{equation}
    \lim_{\delta\to0}\sup_{\lambda\ge1}\sum_n
\|(\Lambda^{-1/2}g^\lambda_n)
_{\leq\delta}\|_{L^2_\lambda}^2=0.
\label{eq:(Low F)}
\end{equation}
The first condition controls the high-frequency tail of the 
forcing uniformly in the box size, 
while the second one prevents the Hamiltonian input from 
concentrating near the lowest modes. 
Thus the forcing acts essentially at finite, $\mathcal{O}(1)$--scales 
rather than 
directly at the box scale.

With the forcing assumptions above, 
we now specify the stationary solutions
used throughout the paper. 
We work with solutions which are strong in 
both senses: 
probabilistically strong, namely
adapted to a prescribed stochastic basis carrying the driving 
Brownian motions, and
strong in the PDE sense, namely possessing the Sobolev regularity
which is 
needed to justify the 
identities and estimates used below.

\begin{definition}[Statistically stationary strong solution]
Let $\nu,\alpha>0$ and $\lambda\ge1$ be fixed. Given a stochastic basis 
$\mathcal{S} =\{\Omega, \mathcal{F}, 
(\mathcal{F}_t), \mathbb{P}, \{W_n\}_{n\in\mathbb{N}}\}$, a process $\theta$ is called a statistically
stationary strong solution to  \eqref{eq:(1.1)} if the following conditions hold:

\begin{enumerate}[label=(\roman*)]
\item $\theta$ is $\mathcal{F}_t$-adapted
 and satisfies the stochastic SQG equation \eqref{eq:(1.1)} in a 
pathwise sense with respect to the given stochastic basis $\mathcal{S}$.

\item $\theta$ has the following 
Sobolev regularity:
$$
\theta\in C([0,\infty);H^1(\T_\lambda^2))
\cap L^2_{\rm loc}([0,\infty);H^2(\T_\lambda^2))
\quad \Prob\text{-a.s.}
$$

\item The law of $\theta$ is invariant under time translations: for every $\tau\ge0$,
\begin{equation*}
    \mc L(\theta(\cdot+\tau))=\mc L(\theta(\cdot))
\end{equation*}
as probability measures on the path space.
\end{enumerate}
\end{definition}

The following existence result follows from the standard well-posedness theory
for stochastic SQG equations and the Krylov--Bogoliubov construction of
invariant measures; see, for instance, \cite{ref-Zhu-2015-AOP}. 

\begin{theorem}[Existence of statistically stationary strong solutions]
\label{Thm2.2}
Consider the aforementioned zero-mean forcing functions
$\{g_n^\lambda\}_{n\in\mathbb N}$ 
satisfying \eqref{eq:(regularity for noise)}. Then, for every fixed
$\nu,\alpha>0$ and $\lambda\ge1$, equation  \eqref{eq:(1.1)} admits a statistically stationary
strong solution $\theta$ globally such that
$$
\theta\in C([0,\infty);H^1(\T_\lambda^2))
\cap L^2_{\rm loc}([0,\infty);
H^2(\T_\lambda^2)),
\quad \Prob\text{-a.s.}
$$
\end{theorem}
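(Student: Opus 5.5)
The plan is to follow the standard route for stochastic dissipative equations: global strong well-posedness for fixed $(\nu,\alpha,\lambda)$, uniform-in-time moment bounds in $H^1$ (and time-integrated bounds in $H^2$), and then a Krylov--Bogoliubov argument to produce an invariant measure. A stationary strong solution is then obtained by starting the strong solution from an $\mathcal F_0$-measurable initial datum distributed according to that invariant measure. Since $\nu>0$, the dissipation $-\nu\Delta$ is subcritical relative to the SQG nonlinearity, so we are in the regime covered by \cite{ref-Zhu-2015-AOP}. The extra linear term $\alpha\Lambda^{-2\gamma}\theta$ is a nonnegative, self-adjoint, lower-order operator on zero-mean functions, so it only helps the energy estimates.

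\textbf{Step 1: well-posedness.} For $\theta_0\in H^1$ I would construct a unique pathwise solution in $C([0,\infty);H^1)\cap L^2_{\rm loc}H^2$. I would use Galerkin approximation on the finitely many Fourier modes $|k|\le N$ of $\mathbb Z^2_\lambda$ and apply It\^o's formula to $\|\theta\|_{L^2}^2$ and $\|\nabla\theta\|_{L^2}^2$. The $L^2$ transport term vanishes since $\nabla\cdot u=0$. For the $H^1$ level, the nonlinear term $\langle \nabla(u\cdot\nabla\theta),\nabla\theta\rangle=\langle \partial_j u\cdot\nabla\theta,\partial_j\theta\rangle$ is bounded, via $L^2$-boundedness of the Riesz transforms and Ladyzhenskaya's inequality, by $\|\nabla\theta\|_{L^2}^2\|\nabla\theta\|_{L^2}^{\ldots}$-type terms. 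More precisely, I would use $\|\nabla u\|_{L^4}\|\nabla\theta\|_{L^4}\|\nabla\theta\|_{L^2}\lesssim \|\nabla\theta\|_{L^2}^2\|\Delta\theta\|_{L^2}$, and then Young's inequality gives $\tfrac\nu2\|\Delta\theta\|^2+C\nu^{-1}\|\nabla\theta\|^4$. The It\^o correction is $\sum_n\|\nabla g_n^\lambda\|^2<\infty$ by \eqref{eq:(regularity for noise)}.

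\textbf{Step 2: closing the estimate and uniqueness.} The quartic term is handled by Gronwall with the integrable weight $\|\nabla\theta\|_{L^2}^2$, using the $L^2$-level bound $\nu\int_0^t\|\nabla\theta\|^2\le \|\theta_0\|^2+Ct+$martingale. This yields pathwise global bounds, passage to the limit via tightness or compactness, and pathwise uniqueness from an $L^2$ difference estimate. Pathwise uniqueness plus existence of martingale solutions then gives probabilistically strong solutions by Yamada--Watanabe. Alternatively, one can argue directly by a stopping-time and local Lipschitz fixed point, with non-explosion from the above bounds.

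\textbf{Step 3: Krylov--Bogoliubov.} Taking expectations in the $L^2$ balance gives
\begin{equation*}
\frac{d}{dt}\E\|\theta\|^2+2\nu\E\|\nabla\theta\|^2\le \sum_n\|g_n^\lambda\|^2 .
\end{equation*}
This gives $\frac1T\int_0^T\E\|\theta\|_{H^1}^2\,dt\le C$ uniformly in $T$ when starting from $\theta_0=0$, using Poincaré on the bounded torus. The time-averaged laws $\mu_T=\frac1T\int_0^T\mathcal L(\theta(t))\,dt$ are therefore tight in $L^2$, since $H^1\subset\subset L^2$. The Feller property of the transition semigroup in $L^2$ follows from continuous dependence. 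Any limit point is then invariant.

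\textbf{Step 4: support and regularity.} To place the invariant measure on $H^1$, I would use a higher moment bound, e.g. $\frac1T\int\E\|\theta\|_{H^2}^{p}$ or an exponential moment combined with the $H^1$ It\^o estimate, so that $\mu(H^1)=1$. Running the solution from a $\mu$-distributed $\theta_0$ gives stationarity of one-time marginals, and the Markov property upgrades this to invariance of the path law.

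\textbf{Main obstacle.} I expect the main obstacle to be exactly this time-uniform control at the $H^1$ level, because the $\|\nabla\theta\|^4$ term prevents a direct expectation bound. I would first try working with $\E\log(1+\|\nabla\theta\|^2)$, or with exponential-martingale estimates on $\|\theta\|^2$ to control $\E\exp\bigl(C\nu^{-1}\int\|\nabla\theta\|^2\bigr)$ on unit time intervals. Failing that, I would use the smoothing of the dissipation to get $\mu(H^1)=1$ from $\mu(H^{1/2+})$-type bounds by a bootstrap at stationarity.
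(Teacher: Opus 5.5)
Your route (strong well-posedness, the $L^2$ energy bound, Krylov--Bogoliubov on $L^2$, then starting from a $\mu$-distributed $\mathcal F_0$-measurable datum) is the one the paper relies on. The paper gives no argument of its own and only points to the standard theory and R\"ockner--Zhu--Zhu. Your Steps 1--3 are sound.

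\textbf{Step 4 is already closed by Step 3.} Your ``main obstacle'' is misdiagnosed. You leave $\mu(H^1)=1$ open and propose $H^2$, logarithmic or exponential moment bounds; none is needed, and the $H^2$ one is probably out of reach. Set $F(x)=\|\nabla x\|_{L^2}^2$, with $F=+\infty$ off $H^1$. Then $F$ is lower semicontinuous on $L^2$, being the supremum of the continuous truncations $\sum_{|k|\le N}|k|^2|\widehat x(k)|^2$. Apply the portmanteau theorem to $F\wedge M$ along $\mu_{T_k}\rightharpoonup\mu$, then let $M\to\infty$ by monotone convergence:
\[
\int \|\nabla x\|_{L^2}^2\,\mu(dx)\le\liminf_{k\to\infty}\frac1{T_k}\int_0^{T_k}\E\|\nabla\theta(t)\|_{L^2}^2\,dt\le \frac{1}{2\nu}\sum_n\|g_n^\lambda\|_{L^2}^2 .
\]
So $\mu(H^1)=1$, and $\mu$ even has a finite second $H^1$ moment, which also removes any worry about random initial data without moments in Step 1.

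\textbf{Path regularity is pathwise, not a moment bound.} The required property $\theta\in C([0,\infty);H^1)\cap L^2_{\rm loc}H^2$ almost surely follows from Step 1 applied to the $H^1$-valued datum $\theta_0\sim\mu$. Your Gronwall argument with the integrable weight $\nu^{-1}\|\nabla\theta\|_{L^2}^2$, where the stochastic integral is a continuous local martingale, controls the quartic term path by path. So the fact that $\|\nabla\theta\|^4$ blocks an expectation bound has no bearing on the statement, and no time-uniform $H^1$ control is needed.

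\textbf{Minor point: state the $L^2$ theory.} Krylov--Bogoliubov is run on $L^2$, so the transition semigroup must be defined and Feller on $L^2$, which requires well-posedness for $L^2$ data, not only the $H^1$ theory of Step 1. This does hold here, exactly as for 2D Navier--Stokes. Your $L^2$ difference estimate, via
\[
|\langle \mathcal R^\perp w\cdot\nabla\theta_2,w\rangle|\lesssim\|w\|_{L^2}\|\nabla w\|_{L^2}\|\nabla\theta_2\|_{L^2},
\]
combined with $\int_0^t\|\nabla\theta_2\|_{L^2}^2\,ds<\infty$ from the energy estimate, gives uniqueness and continuous dependence in $L^2$. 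You should say so explicitly.
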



We next recall a basic integrability fact 
for stationary solutions, which will be used 
later when the mixed third-order fluxes are 
introduced. Related $L^p$--
estimates for stochastic SQG 
were obtained in 
\cite{ref-Zhu-2015-AOP}.
For completeness, we give a direct 
proof of the estimate needed 
in the present additive stationary setting
in Appendix \hyperref[app:main]{A}.

\begin{proposition}[$L^3$-integrability of stationary strong solutions]
\label{Prop2.4}
Let $\theta$ be a statistically stationary strong solution given by 
Theorem \ref{Thm2.2}.
Then, for every fixed
$\nu,\alpha>0$ and $\lambda\ge1$,
$$
\E\norm{\theta}_{L^3_\lambda}^3<\infty.
$$
Consequently, for every $\ell>0$, the structure functions $S_E(\ell)$ 
and $S_H(\ell)$,
appearing in Theorem \ref{Thm} below,
are finite and hence well defined.
\end{proposition}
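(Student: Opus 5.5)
The plan is to prove the stronger bound $\E\norm{\theta}_{L^2_\lambda}^2+\E\norm{\nabla\theta}_{L^2_\lambda}^2<\infty$ (stationary $H^1$ moment) and then use the two-dimensional Sobolev/Gagliardo--Nirenberg inequality
$$
\norm{\theta}_{L^3_\lambda}^3\lesssim_\lambda \norm{\theta}_{L^2_\lambda}^2\norm{\theta}_{H^1_\lambda},
$$
which at fixed $\lambda$ reduces the claim to a moment bound $\E\bigl(\norm{\theta}_{L^2}^2\norm{\theta}_{H^1}\bigr)<\infty$. By Cauchy--Schwarz it suffices to have $\E\norm{\theta}_{L^2}^4<\infty$ and $\E\norm{\theta}_{H^1}^2<\infty$. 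An alternative is to apply It\^o's formula directly to the functional $\norm{\theta}_{L^3}^3$, but that would require regularizing $|\theta|^3$. The Sobolev route only uses quadratic functionals, for which It\^o's formula is justified by the strong regularity $C([0,\infty);H^1)\cap L^2_{\rm loc}H^2$ together with the smoothness of the $g_n^\lambda$.

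\textbf{Step 1 ($L^2$ balance).} I apply It\^o's formula to $\norm{\theta}_{L^2}^2$. The nonlinear term $\int (u\cdot\nabla\theta)\theta\,dx$ vanishes because $\nabla\cdot u=0$. This gives
$$
d\norm{\theta}^2_{L^2}+2\nu\norm{\nabla\theta}^2_{L^2}dt+2\alpha\norm{\Lambda^{-\gamma}\theta}^2_{L^2}dt=\sum_n\norm{g_n}^2dt+dM_t .
$$
To avoid assuming finiteness of moments a priori, I localize with stopping times $\tau_R=\inf\{t:\norm{\theta(t)}_{H^1}\ge R\}$, or alternatively apply It\^o to the bounded function $F(\norm{\theta}^2)$ with $F(x)=x/(1+\epsilon x)$. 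Then I take expectations, use stationarity so that the time-derivative term has zero mean, and let $\epsilon\to0$ by monotone convergence. This yields
$$
2\nu\E\norm{\nabla\theta}^2\le\sum_n\norm{g_n}^2,
$$
and by Poincar\'e on zero-mean functions, with constant depending on $\lambda$, also $\E\norm{\theta}_{L^2}^2<\infty$.

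\textbf{Step 2 ($H^1$ and fourth $L^2$ moments).} For $\E\norm{\theta}_{L^2}^4$, I apply It\^o to $\norm{\theta}^4$. The dissipation produces $\nu\norm{\theta}^2\norm{\nabla\theta}^2\gtrsim_\lambda\norm{\theta}^4$. The forcing and quadratic-variation terms are bounded by $C\norm{\theta}^2$, which is already integrable by Step 1. The same truncation-plus-stationarity argument then gives $\E\norm{\theta}^4<\infty$.

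For $\E\norm{\nabla\theta}^2$ this is exactly Step 1, so an $H^2$-level estimate is not actually needed. With $\norm{\theta}_{H^1}^2\sim\norm{\nabla\theta}^2$ on zero-mean functions, Step 1 gives $\E\norm{\theta}_{H^1}^2<\infty$, and Cauchy--Schwarz then finishes the proof of $\E\norm{\theta}^3_{L^3}<\infty$.

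\textbf{Step 3 (structure functions).} I bound the integrands by H\"older's inequality. For $S_E$,
$$
|\delta_h\theta|^2|\delta_h u|\le C(|\theta(\cdot+h)|+|\theta|)^2(|u(\cdot+h)|+|u|),
$$
so translation invariance of the $L^3$ norm gives a bound by $\norm{\theta}_{L^3}^2\norm{u}_{L^3}$. Here $\norm{u}_{L^3}\lesssim\norm{\theta}_{L^3}$ by $L^3$-boundedness of the Riesz transforms on the torus. For $S_H$ I use $\norm{\psi}_{L^3}=\norm{\Lambda^{-1}\theta}_{L^3}\lesssim_\lambda\norm{\theta}_{L^3}$, valid since the zero mode is absent and the multiplier $|k|^{-1}$ is bounded by $\lambda/2\pi$ and smooth away from the origin. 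Both expectations are therefore bounded by $C_\lambda\E\norm{\theta}_{L^3}^3<\infty$.

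\textbf{Main obstacle.} The delicate step is the rigorous use of stationarity when moments are not known to be finite a priori. This means justifying that the expectation of the It\^o differential vanishes and that the local martingale is a true martingale. I would handle it by the bounded-functional truncation $F_\epsilon$ described above, combined with Fatou's lemma and monotone convergence.
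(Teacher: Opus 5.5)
Your proposal is correct and follows the paper's proof essentially step by step. The SPE balance gives $\mathbb{E}\|\Lambda\theta\|_{L^2_\lambda}^2<\infty$; It\^o's formula for $\|\theta\|_{L^2_\lambda}^4$ together with coercivity on zero-mean functions gives the fourth moment; Gagliardo--Nirenberg with Cauchy--Schwarz yields $\mathbb{E}\|\theta\|_{L^3_\lambda}^3<\infty$; and H\"older's inequality with $L^3$-boundedness of $\mathcal{R}^\perp$ and $\Lambda^{-1}$ then handles $S_E$ and $S_H$. The only differences are cosmetic: you get coercivity from Poincar\'e on the viscous term alone rather than from the combined operator $\nu\Lambda^2+\alpha\Lambda^{-2\gamma}$, and you spell out the bounded-functional truncation $x\mapsto x/(1+\epsilon x)$ that the paper only alludes to.
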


\subsection{ SPE and Hamiltonian balance laws}
\ \ \ \ The inviscid SQG equation
has two quadratic, 
nonnegative definite conserved quantities: 
the SPE $E(\theta)=
\|\theta\Vert_{L^2}^2$
 and the Hamiltonian $H(\theta)=
\|\Lambda^{-\frac{1}{2}}\theta\Vert_{L^2}^2$.
For statistically stationary solutions of
\eqref{eq:(1.1)},
applying Itô's formula to $E(\theta)$
 and $H(\theta)$, we
 derive the following balance relations: 
\begin{equation}
    \nu\mathbb{E}\|\nabla\theta\Vert_{L_\lambda^2}^2+
\alpha\mathbb{E}\|\Lambda^
{-\gamma}\theta\Vert_{L_\lambda^2}^2=
\varepsilon,
\label{eq:(balance I)}
\end{equation}
\begin{equation}
    \nu\mathbb{E}\|\Lambda^{\frac{1}{2}}
\theta\Vert_{L_\lambda^2}^2+
\alpha\mathbb{E}\|\Lambda^
{-\gamma-{\frac{1}{2}}}\theta\Vert_
{L_\lambda^2}^2=
\eta,
\label{eq:(balance II)}
\end{equation}
where 
$$\varepsilon:=\frac{1}{2}
\sum_{n\in\mathbb{N}}\fint_
{\mathbb{T}_\lambda^2}
|g^\lambda_n(x)\vert^2\,dx<\infty$$
denotes the 
averaged rate of the SPE input, and
$$\eta:=\frac{1}{2}
\sum_{n\in\mathbb{N}}\fint_{\mathbb{T}_\lambda^2}
|\Lambda^{-\frac{1}{2}}g^\lambda_n(x)\vert^2\,dx<\infty$$
denotes the 
averaged rate of the Hamiltonian input.
Here we assume that these averaged 
input rates are 
normalized to be independent of the box size $\lambda$.

\subsection{Weak anomalous dissipation (W.A.D.) assumption}\label{2.3}

\ \ \ \ We now pass from the fixed-parameter stationary problem to the inviscid
large-box regime used in the main results. Let
$\{\theta^{\nu,\alpha}\}_{\nu,\alpha>0}$ be a family of statistically stationary
solutions to \eqref{eq:(1.1)}, posed on
$
\mathbb T^2_{\lambda},\ 
\lambda=\lambda(\nu,\alpha)\to\infty
\ \text{as}\ \nu,\alpha\to0 .
$
The balance laws \eqref{eq:(balance I)}--\eqref{eq:(balance II)} suggest a
natural separation of the two dissipative mechanisms. We 
now introduce the following
weak anomalous dissipation (W.A.D.) assumption,
which requires that, in the vanishing 
viscosity and damping limit, the 
SPE input is asymptotically 
dissipated entirely by the small-scale 
viscosity, while the
Hamiltonian input is asymptotically removed 
entirely by the large-scale damping:
\begin{equation}
\lim_{\nu,\alpha \to 0}\alpha \mathbb{E} \| 
\Lambda^{-\gamma} \theta \|_
{{L}_\lambda^2}^2 = 0,\ \lim_{\nu,\alpha \to 0}
  \nu \mathbb{E} \| \Lambda^{1/2} 
\theta \|_{{L}_\lambda^2}^2 = 0
\label{eq:(1.2)}.
\end{equation}

\begin{remark}
 The W.A.D. \eqref{eq:(1.2)} should be understood as a weak
zeroth-law-type assumption on the stationary family, rather than as an a priori
estimate.
Assumptions of this form have been used to derive rigorous third-order
laws for stationary turbulence, for instance in the Kolmogorov $4/5$ law for
3D Navier--Stokes equations and in the dual-cascade flux laws for 2D
Navier--Stokes equations; see
\cite{ref-2D-NS-dual, ref-3D-NS-2019}. 
The existence of the solutions satisfying
\eqref{eq:(1.2)} remains an open problem, 
just as the corresponding
W.A.D. assumptions remain open in the Navier--Stokes
settings mentioned above.
\end{remark}

\begin{remark}
    When taking $\nu\sim\alpha$, the two conditions in W.A.D.
    \eqref{eq:(1.2)} are equivalent. Indeed, by interpolation and Hölder's inequality, together with $\nu\sim\alpha$,
    we can obtain
    $$\nu\mathbb E\|\Lambda^{1/2}\theta\Vert_{L^2_\lambda}^2
\lesssim
\bigl(\nu\mathbb E\|\Lambda\theta\Vert_{L^2_\lambda}^2\bigr)^{\frac{2\gamma+1}{2\gamma+2}}
\bigl(\alpha\mathbb E\|\Lambda^{-\gamma}\theta\Vert_{L^2_\lambda}^2\bigr)
^{\frac{1}{2\gamma+2}}\leq
\varepsilon^{\frac{2\gamma+1}{2\gamma+2}}
\bigl(\alpha\mathbb E\|\Lambda^{-\gamma}\theta\Vert_{L^2_\lambda}^2\bigr)
^{\frac{1}{2\gamma+2}}.$$
Thus the first condition in W.A.D. implies the second one.
Conversely, by the same argument, 
using the interpolation between $\dot H^{1/2}$ and 
$\dot H^{-\gamma-1/2}$, the second condition 
implies the first one. 

\end{remark}

\begin{remark}
De Rosa and Yuzbasioglu \cite{ref-DeRosaYuzbasioglu26} showed that, on a fixed torus, Hamiltonian compactness rules out anomalous small-scale Hamiltonian dissipation for deterministic generalized SQG in the inviscid limit. In the same spirit, in the
present large-box stationary setting one has the following
frequency non-concentration criterion for W.A.D., obtained by
high--low frequency splitting, interpolation, and the balance laws. For statistically stationary solutions of
\eqref{eq:(1.1)}, suppose that $\nu\sim\alpha$, and 
assume that, for every fixed $K<\infty$ and every fixed $\kappa>0$,
$$\sup_{\nu,\alpha\in(0,1)}\mathbb{E}\|(\theta)_{\leq K}\Vert^2_{L_\lambda^2}<\infty,\quad \sup_{\nu,\alpha\in(0,1)}\mathbb{E}\|(\Lambda^{-1/2}\theta)_{\geq \kappa}\Vert^2_{L_\lambda^2}<\infty,$$
then W.A.D. \eqref{eq:(1.2)} holds. 

In this sense, the above criterion gives a frequency non-concentration viewpoint on W.A.D. adapted to the dual-cascade picture: SPE does not accumulate at any fixed finite range of frequencies, and Hamiltonian does not accumulate away from the zero-frequency region.

\end{remark}

\

\subsection{Main results}
\ \ \ \ The main results of this paper,
presented below, are the 
flux laws for dual cascade in such
SQG model:

\begin{theorem}[Flux laws for dual cascade]
\label{Thm}
Let $\{\theta^{\nu,\alpha}\}_{\nu,\alpha>0}:
=\{\theta\}_{\nu,\alpha>0}$
be a sequence of 
statistically stationary solutions of 
\eqref{eq:(1.1)};
see Section \ref{2.2 Stationary} for 
the precise 
definition and existence result.
Assume that the forcing 
$\{g_n^\lambda\}_{n\in\mathbb N}$ 
satisfies
\eqref{eq:(regularity for noise)}--
\eqref{eq:(Low F)}.
Under W.A.D. \eqref{eq:(1.2)} and
taking
$\nu\sim\alpha$, in the large-box limit 
$\lim_{\nu,\alpha\to0}\lambda(\nu,\alpha)=\infty$, 
we have:

(i)\ Direct\ cascade: In the small-scale 
interval $(\ell_\nu, \ell_I),\
\lim_{\nu\to0}\ell_\nu=0$, the following 
law holds
\begin{equation}
    \lim_{\ell_I\to 0}\limsup _{\nu,\alpha\to0}
\sup _{\ell\in[\ell_\nu,\ell_I]}\left\lvert 
    \frac{S_E(\ell)}{\ell}+2\varepsilon
    \right\rvert=0, 
\end{equation}
where $S_E(\ell)$ is a
third-order mixed structure function associated
with the 
SPE flux, given by
\begin{equation}
S_E(\ell) := \mathbb{E} 
\fint_{\mathbb{S}^1}\fint_{\mathbb{T}_\lambda^2} 
|\delta_{\ell \hat{n}} \theta|^2 
(\delta_{\ell \hat{n}} u \cdot \hat{n} ) \, dx 
\, dS(\hat{n}),
\end{equation}
and $\ell_\nu$ is the dissipative scale which can be chosen as
$$\lim_{\nu,\alpha\to0}\frac{\nu \mathbb{E}\|\theta\|_{L_\lambda^2}^2 } {\ell^2_\nu} =0.$$

(ii)\ Inverse\ cascade: In the large-scale 
interval $(\ell_I, \ell_\alpha),\
\lim_{\alpha\to0}\ell_\alpha=\infty$, the following 
law holds:
\begin{equation}
     \lim_{\ell_I\to \infty}\limsup _{\nu,\alpha\to0}
\sup _{\ell\in[\ell_I,\ell_\alpha]}\left\lvert 
    \frac{S_H(\ell)}{\ell}-\eta
    \right\rvert=0,
\end{equation}
where $S_H(\ell)$ is an 
antisymmetrized mixed third-order structure 
function associated 
with the Hamiltonian flux, given by
\begin{equation}
S_H(\ell) := \mathbb{E} \fint_{\mathbb{S}^1}
\fint_
{\mathbb{T}_\lambda^2} \theta(x)
  \Bigl[ \psi(x + \ell \hat{n})
   - \psi(x - \ell \hat{n})\Bigr]
 (u(x) \cdot \hat{n}) \, dx \, dS(\hat{n}),
\end{equation}
and $\ell_\alpha$ is the damping scale which can be chosen as
$$ \lim_{\nu,\alpha\to0}\ell_{\alpha}^2 \left({\alpha} \mathbb{E}\|\Lambda^{-\gamma+\frac{1}{2}} \theta\Vert_{L_\lambda^2}^2\right)=0,\quad \lim_{\nu,\alpha\to0}\frac{\ell_\alpha}{\lambda}=0. $$

\end{theorem}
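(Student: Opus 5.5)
The plan is the Bedrossian--Coti Zelati--Punshon-Smith--Weber strategy: derive Kármán--Howarth--Monin (KHM) relations for the two-point correlations of $\theta$ and of $(\theta,\psi)$, take spherical averages, and pass to the limit using the balance laws \eqref{eq:(balance I)}--\eqref{eq:(balance II)}, W.A.D. \eqref{eq:(1.2)}, and the forcing assumptions \eqref{eq:(regularity for noise)}--\eqref{eq:(Low F)}.

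\textbf{Direct SPE cascade.}
1. Define $\Gamma(h)=\E\fint\theta(x)\theta(x+h)\,dx$ and $a(h)=\frac12\sum_n\fint g_n^\lambda(x)g_n^\lambda(x+h)\,dx$, so that $a(0)=\varepsilon$.
2. Apply Itô's formula to $\theta(x)\theta(x+h)$ and use stationarity. Together with $\nabla\cdot u=0$ and homogeneity, this gives
\begin{equation*}
\tfrac12\nabla_h\cdot D(h)=2\nu\Delta_h\Gamma(h)-2\alpha\Lambda_h^{-2\gamma}\Gamma(h)+2a(h)
\end{equation*}
in the distributional sense, with $D(h)=\E\fint|\delta_h\theta|^2\delta_h u\,dx$. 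The $L^3$ bound of Proposition~\ref{Prop2.4} and $H^2$ regularity make every term meaningful. The constants are to be fixed so that the identity reproduces the $-2\varepsilon$ normalisation.
3. Integrate over the ball $B_\ell$ and apply the divergence theorem. This expresses $S_E(\ell)=\fint_{\mathbb S^1}D(\ell\hat n)\cdot\hat n\,dS$ in terms of $\ell^{-1}\int_{B_\ell}(\cdots)\,dh$.
4. Estimate the resulting terms:
   - Forcing: by \eqref{eq:(regularity for noise)}, $a$ is uniformly Lipschitz, so $\fint_{B_\ell}a\to\varepsilon$ as $\ell\to0$, uniformly in $\lambda$.
   - Damping: it is bounded by $\alpha\E\|\Lambda^{-\gamma}\theta\|^2\ell$, which is $o(\ell)$ by W.A.D.
   - Viscous: $\nu\int_{B_\ell}\Delta\Gamma=\nu\ell\fint_{\mathbb S^1}\hat n\cdot\nabla\Gamma(\ell\hat n)$. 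Write $\nabla\Gamma(\ell\hat n)=-\frac12\nabla_h\E\|\delta_h\theta\|^2$ and use $|\nabla\Gamma|\le\E\|\theta\|\|\nabla\theta\|$. Cauchy--Schwarz with $\nu\E\|\nabla\theta\|^2\le\varepsilon$ gives a bound $\lesssim\sqrt{\varepsilon\,\nu\E\|\theta\|^2}$. Relative to $\ell$ this is $o(1)$ exactly when $\ell\ge\ell_\nu$, which explains the choice of $\ell_\nu$.
5. Taking $\limsup$ and then $\ell_I\to0$ yields (i).

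\textbf{Inverse Hamiltonian cascade.}
1. Use the mixed correlation $\Gamma_H(h)=\E\fint\theta(x)\psi(x+h)\,dx$ with input $b(h)=\frac12\sum_n\fint g_n^\lambda\Lambda^{-1}g_n^\lambda(\cdot+h)$, so that $b(0)=\eta$.
2. Itô's formula gives a flux term $\E\fint[(u\cdot\nabla\theta)(x)\psi(x+h)+\theta(x)\Lambda^{-1}(u\cdot\nabla\theta)(x+h)]$. The key algebraic step is to use $\Lambda^{-1}$ self-adjointness and the antisymmetry of the SQG Hamiltonian nonlinearity ($\langle u\cdot\nabla\theta,\psi\rangle=0$, i.e.\ $u=\nabla^\perp\psi$) to rewrite it as $\nabla_h\cdot$ of $\E\fint\theta(x)u(x)[\psi(x+h)-\psi(x-h)]$, symmetrised in $h\to-h$. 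This is the main obstacle: the commutator-type manipulation showing the nonlocal term is an exact $h$-divergence of precisely the antisymmetrised quantity in $S_H$. I would try it on the Fourier side, symmetrising the triad sum $k+p+q=0$ with $|k|^{-1}$ weights.
3. Integrate over $B_\ell$ and divide by $\ell$. The forcing contribution is $\ell^{-1}\int_{B_\ell}b$, and $b(h)\to0$ in ball-average as $\ell\to\infty$ by \eqref{eq:(Low F)} (Riemann--Lebesgue after removing low modes), uniformly in $\lambda$ provided $\ell\ll\lambda$.
4. Estimate the remaining terms:
   - Damping: it gives $\alpha\int_{B_\ell}\Lambda^{-2\gamma}\Gamma_H$, with total limit $\eta$ by the balance and W.A.D. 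The error term is controlled through a Taylor/Fourier bound of size $\ell^2\alpha\E\|\Lambda^{-\gamma+1/2}\theta\|^2$, which vanishes by the choice of $\ell_\alpha$.
   - Viscous: it is bounded by $\nu\E\|\Lambda^{1/2}\theta\|^2\to0$ via W.A.D.
5. The sign bookkeeping produces $+\eta$, giving (ii).
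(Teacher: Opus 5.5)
Your strategy is the paper's. You use It\^o's product rule plus stationarity for $\theta(x)\theta(x+h)$ and $\theta(x)\psi(x+h)$, integrate the KHM relations over $\{|h|\le\ell\}$, and pass to the limit term by term: Cauchy--Schwarz for viscosity, W.A.D.\ for damping, small-$|h|$ continuity of the forcing correlation for (i), and decay of the ball average of $e^{ik\cdot h}$ away from low modes plus \eqref{eq:(Low F)} for (ii). Your second-order bound $\bigl|1-\fint_{|h|\le\ell}e^{ik\cdot h}\,dh\bigr|\le\frac12\ell^2|k|^2$ for the damping remainder in (ii) is valid. It is slightly cleaner than the paper's first-order Cauchy--Schwarz estimate, and both vanish under the stated choice of $\ell_\alpha$.

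The genuine gap is that you never show the inertial ranges are nondegenerate. The theorem asserts that scales $\ell_\nu\to0$ and $\ell_\alpha\to\infty$ with the stated properties exist, and this requires
\[
\nu\E\|\theta\|_{L^2_\lambda}^2\to0,\qquad \alpha\E\|\Lambda^{-\gamma+\frac12}\theta\|_{L^2_\lambda}^2\to0 .
\]
Neither follows from the balances alone; for instance, Poincar\'e on $\mathbb{T}^2_\lambda$ only gives $\nu\E\|\theta\|^2_{L^2_\lambda}\lesssim\lambda^2\varepsilon$, which is useless as $\lambda\to\infty$. The paper obtains both by interpolation:
\[
\nu\E\|\theta\|_{L^2_\lambda}^2\lesssim\bigl(\nu\E\|\Lambda\theta\|_{L^2_\lambda}^2\bigr)^{\frac{\gamma}{\gamma+1}}\bigl(\alpha\E\|\Lambda^{-\gamma}\theta\|_{L^2_\lambda}^2\bigr)^{\frac{1}{\gamma+1}},
\]
\[
\alpha\E\|\Lambda^{-\gamma+\frac12}\theta\|_{L^2_\lambda}^2\lesssim\bigl(\nu\E\|\Lambda^{\frac12}\theta\|_{L^2_\lambda}^2\bigr)^{\frac{1}{\gamma+1}}\bigl(\alpha\E\|\Lambda^{-\gamma-\frac12}\theta\|_{L^2_\lambda}^2\bigr)^{\frac{\gamma}{\gamma+1}},
\]
combined with the balance laws and W.A.D. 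The first inequality uses $\nu\lesssim\alpha$ and the second uses $\alpha\lesssim\nu$. This is exactly where the hypothesis $\nu\sim\alpha$ enters, and your proposal never uses it. Without this step the windows $[\ell_\nu,\ell_I]$ and $[\ell_I,\ell_\alpha]$ may be empty and both laws vacuous.

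There are two further corrections.

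First, your displayed direct KHM identity has the wrong sign on every right-hand term; as written it would give $S_E/\ell\to+2\varepsilon$. Expand $D$ and use homogeneity and $\nabla\cdot u=0$. This gives
\[
\nabla_h\cdot D=-2\E\fint\bigl(\theta(x+h)\,u(x)\cdot\nabla\theta(x)+\theta(x)\,(u\cdot\nabla\theta)(x+h)\bigr)\,dx ,
\]
which is exactly twice the transport term in the It\^o balance. The correct relation is therefore $\frac12\nabla_h\cdot D=-2\nu\Delta_h\Gamma+2\alpha\Lambda_h^{-2\gamma}\Gamma-2a$. The constants should be derived, not adjusted to match the target.

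Second, the step you flag as the main obstacle in (ii) is elementary. It needs neither $\langle u\cdot\nabla\theta,\psi\rangle=0$ nor a triad symmetrization. By self-adjointness of $\Lambda^{-1}$ and translation invariance,
\[
\E\fint\theta(x)\,[\Lambda^{-1}(u\cdot\nabla\theta)](x+h)\,dx=\E\fint\psi(x-h)\,(u\cdot\nabla\theta)(x)\,dx .
\]
Integrating by parts with $\nabla\cdot u=0$ and using $\nabla_x[\psi(x\pm h)]=\pm\nabla_h[\psi(x\pm h)]$ then turns the full transport contribution directly into $\nabla_h\cdot J(h)$.
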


We also state two Onsager-type obstruction results,
which show that
sufficiently regular stationary 
families cannot support the corresponding non-trivial flux laws.

\begin{theorem}[Onsager--type obstruction to the direct SPE flux]
\label{Onsager I}
Let $\{\theta\}_{\nu,\alpha>0}$ be a family of statistically stationary solutions
to \eqref{eq:(1.1)}.
Assume that for some $s>1/3$,
$$
\sup_{\nu,\alpha}
\mathbb E
\|\theta\|^3_{B^s_{3,\infty}(\mathbb T^2_\lambda)}
<\infty,
$$
where the Besov norm is defined with respect to 
the normalized $L^3_\lambda$ norm.
Then
$$
\lim_{\ell_I\to 0}
\limsup _{\nu,\alpha\to0}
\sup_{\ell\in(0,\ell_I]}
\left|
\frac{S_E(\ell)}{\ell}
\right|
=0.
$$
Consequently, along such a uniformly Besov-regular family, no non-trivial direct SPE
flux law can hold.
\end{theorem}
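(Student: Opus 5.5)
The plan is to bound $S_E(\ell)$ directly by Hölder's inequality and the increment characterization of $B^s_{3,\infty}$. No KHM relation is needed. For each fixed $\hat n\in\mathbb S^1$ and $0<\ell\le 1$, Hölder in $x$ (normalized average over $\mathbb T^2_\lambda$) with exponents $(3/2,3)$ gives
$$
\Bigl|\fint_{\mathbb T^2_\lambda}|\delta_{\ell\hat n}\theta|^2(\delta_{\ell\hat n}u\cdot\hat n)\,dx\Bigr|
\le \|\delta_{\ell\hat n}\theta\|_{L^3_\lambda}^2\,\|\delta_{\ell\hat n}u\|_{L^3_\lambda}.
$$
The increment characterization recalled in Section \ref{notations} (valid since we may assume $1/3<s<1$ after lowering $s$, using $B^{s}_{3,\infty}\hookrightarrow B^{s'}_{3,\infty}$ for $s'<s$) yields $\|\delta_{\ell\hat n}\theta\|_{L^3_\lambda}\lesssim \ell^s\|\theta\|_{B^s_{3,\infty}}$.

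The second step handles the velocity factor. Here we need $\|\delta_{\ell\hat n}u\|_{L^3_\lambda}\lesssim \ell^s\|\theta\|_{B^s_{3,\infty}}$. This follows from boundedness of the Riesz transforms on $B^s_{3,\infty}(\mathbb T^2_\lambda)$, which I would verify blockwise. Each homogeneous-type block $\Delta_j\mathcal R_i$, $j\ge0$, is a Fourier multiplier with symbol $\varphi(2^{-j}|k|)\,ik_i/|k|$, smooth and supported in an annulus. By scaling and periodization its kernel has $L^1(\mathbb T^2_\lambda)$ norm bounded uniformly in $j$ and in $\lambda\ge1$, so $\|\Delta_j u\|_{L^3_\lambda}\lesssim\|\tilde\Delta_j\theta\|_{L^3_\lambda}$ with fattened blocks.

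The low block $\Delta_{-1}\mathcal R\theta$ is the only place where $\lambda$-uniformity is delicate, since $k_i/|k|$ is not smooth at $0$. I would sidestep it: write $\Delta_{-1}u=\sum_{j<0}\dot\Delta_j u$ plus a smooth piece, and bound $\|\delta_h\Delta_{-1}u\|_{L^3}\le |h|\,\|\nabla\Delta_{-1}u\|_{L^3}$. Then $\nabla\mathcal R$ has a symbol that is smooth and compactly supported after multiplying by $\chi$, up to a homogeneous-of-degree-one factor. Its dyadic pieces $\dot\Delta_j$, $j<0$, have operator norm $\lesssim 2^{j}$ on $L^3$, which sum. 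This gives $\|\delta_h\Delta_{-1}u\|_{L^3_\lambda}\lesssim|h|\,\|\Delta_{-1}\theta\|_{L^3_\lambda}$, uniform in $\lambda$. I expect this uniform-in-$\lambda$ Riesz estimate at low frequencies to be the main technical obstacle. If it proves awkward, an alternative is to use $\|\delta_h u\|_{L^3}\lesssim\sum_j\min(1,2^j|h|)\|\Delta_j\theta\|_{L^3}$ directly, summing the $j\ge0$ part with the $2^{-js}$ decay.

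Combining the bounds, averaging over $\hat n$, and taking expectations gives
$$
\frac{|S_E(\ell)|}{\ell}\lesssim \ell^{3s-1}\,\sup_{\nu,\alpha}\mathbb E\|\theta\|^3_{B^s_{3,\infty}},\qquad 0<\ell\le1.
$$
The expectation is finite by hypothesis, with constants independent of $\nu,\alpha,\lambda$. Since $3s-1>0$, we get $\sup_{\ell\in(0,\ell_I]}|S_E(\ell)|/\ell\lesssim \ell_I^{3s-1}$ uniformly in the parameters, and letting $\ell_I\to0$ proves the claim. The final consequence is immediate: if the direct flux law of Theorem \ref{Thm}(i) held, then $S_E(\ell)/\ell\to-2\varepsilon\neq0$ on $[\ell_\nu,\ell_I]$, contradicting what we have just shown.
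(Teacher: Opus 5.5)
Your proof is correct and follows the paper's skeleton: H\"older with exponents $(3/2,3)$, the increment bound $\|\delta_{\ell\hat n}\theta\|_{L^3_\lambda}\lesssim \ell^{s}\|\theta\|_{B^s_{3,\infty}}$ after reducing to $s\in(1/3,1)$ by embedding, and the resulting factor $\ell^{3s-1}$. The one place you diverge is the velocity factor, and there your route is a detour. The paper never estimates $u$ in a Besov space. Since $\mathcal R^\perp$ commutes with translations, $\delta_h u=\mathcal R^\perp\delta_h\theta$, and $L^3_\lambda$-boundedness of the Riesz transforms gives $\|\delta_h u\|_{L^3_\lambda}\le C\|\delta_h\theta\|_{L^3_\lambda}$ in one line. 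The $\lambda$-uniformity you flag as the main obstacle is automatic. The symbol $ik_j/|k|$ is homogeneous of degree zero. The dilation $f\mapsto f(\cdot/\lambda)$ is an isometry from the normalized $L^3(\mathbb T^2_1)$ onto $L^3_\lambda$, and it conjugates $\mathcal R_j$ on $\mathbb T^2_1$ to $\mathcal R_j$ on $\mathbb T^2_\lambda$. So the operator norm does not depend on $\lambda$, low frequencies included. Your blockwise argument is still valid: it uses uniform $L^1$ kernel bounds for $\Delta_j\mathcal R$ with $j\ge0$, and $\|\nabla\mathcal R\,\dot\Delta_j\|_{L^3_\lambda\to L^3_\lambda}\lesssim 2^j$ for $j<0$. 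It buys something slightly different. It needs only Young's inequality for smooth frequency-localized multipliers, so it avoids the singular-integral $L^p$ theory for the Riesz transform. It also proves the stronger fact that $\mathcal R^\perp$ is bounded on $B^s_{3,\infty}(\mathbb T^2_\lambda)$ uniformly in $\lambda$. For this theorem that extra information is not needed, and the commutation argument is more economical.
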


The direct obstruction above is governed by high-frequency 
small-scale regularity. For the inverse Hamiltonian cascade, 
the relevant obstruction instead comes from
the low-frequency counterpart.
This leads to the following low-frequency Onsager-type obstruction.

\begin{theorem}[Low--frequency Onsager--type obstruction to the inverse Hamiltonian flux]
\label{Onsager II}
Let $\{\theta\}_{\nu,\alpha>0}$ be a family of statistically 
stationary solutions
to \eqref{eq:(1.1)}.
Suppose that for some $s\in(0,1)$,
$$
\sup_{\nu,\alpha}
\mathbb E
\left[
\|\theta\|^2_{L^3_\lambda}
\left(
\|\theta\|_{L^3_\lambda}
+
\|\theta\|^{\mathrm{low}}_{\dot B^{-s}_{3,\infty}(\mathbb T^2_\lambda)}
\right)
\right]
<\infty.
$$
Then
$$
\lim_{\ell_I\to\infty}
\limsup_{\nu,\alpha\to0}
\sup_{\ell\in[\ell_I,\lambda)}
\left|
\frac{S_H(\ell)}{\ell}
\right|
=0.
$$
Consequently, along such a family that 
is uniformly regular at low frequencies,
no non-trivial inverse
Hamiltonian flux law can hold.
\end{theorem}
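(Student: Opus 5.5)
Since $\theta$ has zero mean, the plan is to bound $S_H(\ell)$ directly by splitting $\theta$ into low- and high-frequency pieces relative to the scale $\ell$. The increment $\psi(x+\ell\hat n)-\psi(x-\ell\hat n)$ is the antisymmetric difference $\delta_{\ell\hat n}\psi(x)+\delta_{\ell\hat n}\psi(x-\ell\hat n)$. Also, $\psi=\Lambda^{-1}\theta$ is one derivative smoother than $\theta$.

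Choose $j_\ell\le0$ with $2^{j_\ell}\sim\ell^{-1}$, and write $\psi=\psi_{\mathrm{lo}}+\psi_{\mathrm{hi}}$, where $\psi_{\mathrm{lo}}=\sum_{j\le j_\ell}\dot\Delta_j\Lambda^{-1}\theta$ and $\psi_{\mathrm{hi}}=\sum_{j>j_\ell}\dot\Delta_j\Lambda^{-1}\theta$.

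\emph{High part.} Bound the difference crudely by $2\|\dot\Delta_j\Lambda^{-1}\theta\|_{L^3_\lambda}$. Bernstein gives $\|\dot\Delta_j\Lambda^{-1}\theta\|_{L^3_\lambda}\lesssim 2^{-j}\|\dot\Delta_j\theta\|_{L^3_\lambda}$. Since $\|\dot\Delta_j\theta\|_{L^3_\lambda}\lesssim\|\theta\|_{L^3_\lambda}$ (the blocks are convolutions with $L^1$-normalized kernels, uniformly in $\lambda$), summing the geometric series over $j>j_\ell$ yields $\|\psi_{\mathrm{hi}}\|_{L^3_\lambda}\lesssim \ell\,\|\theta\|_{L^3_\lambda}$. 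This is only $O(\ell)$, not $o(\ell)$, so a sharper estimate is needed. I refine it by splitting $j_\ell<j\le 0$ from $j>0$. For $j>0$ the contribution is $\lesssim\|\theta\|_{L^3_\lambda}$, which is $o(\ell)$ since $\ell\to\infty$. For $j_\ell<j\le0$, use the low-frequency seminorm, $\|\dot\Delta_j\theta\|_{L^3_\lambda}\le 2^{sj}\|\theta\|^{\mathrm{low}}_{\dot B^{-s}_{3,\infty}}$. The sum is then $\sum_{j_\ell<j\le0}2^{-j(1-s)}\|\theta\|^{\mathrm{low}}\lesssim \ell^{1-s}\|\theta\|^{\mathrm{low}}$, using $s<1$.

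\emph{Low part.} Use the mean value theorem: $\|\psi_{\mathrm{lo}}(\cdot+\ell\hat n)-\psi_{\mathrm{lo}}(\cdot-\ell\hat n)\|_{L^3_\lambda}\le 2\ell\sum_{j\le j_\ell}\|\nabla\dot\Delta_j\Lambda^{-1}\theta\|_{L^3_\lambda}$. Bernstein bounds each term by $\|\dot\Delta_j\theta\|_{L^3_\lambda}\le 2^{sj}\|\theta\|^{\mathrm{low}}$, and summing over $j\le j_\ell$ gives $\lesssim\ell\cdot\ell^{-s}\|\theta\|^{\mathrm{low}}$. On the torus only finitely many $j$ are nonzero, since $|k|\ge2\pi/\lambda$, so the sum is legitimate. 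In total the difference is $\lesssim \|\theta\|_{L^3_\lambda}+\ell^{1-s}\|\theta\|^{\mathrm{low}}_{\dot B^{-s}_{3,\infty}}$, uniformly in $\ell<\lambda$.

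\emph{Conclusion.} Apply Hölder in $x$ with exponents $(3,3,3)$, using $\|u\|_{L^3_\lambda}\lesssim\|\theta\|_{L^3_\lambda}$ from boundedness of the Riesz transforms on $L^3$ of the torus with constant independent of $\lambda$ (by transference from $\mathbb R^2$). This gives
\[
\frac{|S_H(\ell)|}{\ell}\lesssim \ell^{-1}\,\mathbb E\|\theta\|^3_{L^3_\lambda}+\ell^{-s}\,\mathbb E\bigl[\|\theta\|^2_{L^3_\lambda}\|\theta\|^{\mathrm{low}}_{\dot B^{-s}_{3,\infty}}\bigr]\lesssim \ell_I^{-s}
\]
for $\ell\ge\ell_I\ge1$, uniformly in $\nu,\alpha$ by the hypothesis. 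Letting $\ell_I\to\infty$ finishes the proof.

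The main obstacle is ensuring that all Bernstein and Littlewood--Paley constants on $\mathbb T^2_\lambda$ (with the normalized norm) are uniform in $\lambda$. I would handle this by viewing periodic functions as functions on $\mathbb R^2$ and convolving with the rescaled Schwartz kernels, whose $L^1$ norms are scale-invariant.
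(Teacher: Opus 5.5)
Your proposal is correct and follows essentially the same route as the paper. The paper proves a lemma bounding $\|\delta_h\psi\|_{L^3_\lambda}$ for $|h|\ge1$ through the same three-range dyadic split ($j\le J$ with $2^J\sim|h|^{-1}$, then $J<j\le0$, then $j>0$), using the per-block bound $\min\{|h|,2^{-j}\}\|\dot\Delta_j\theta\|_{L^3_\lambda}$. It applies this with $h=2\ell\hat n$ via $\psi(x+\ell\hat n)-\psi(x-\ell\hat n)=\delta_{2\ell\hat n}\psi(x-\ell\hat n)$ and concludes by H\"older as you do. Your version differs only cosmetically: you work with the symmetric difference directly, and you keep the $\ell^{-1}\mathbb E\|\theta\|^3_{L^3_\lambda}$ term separate instead of absorbing it into $\ell^{1-s}$.
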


\section{\textbf{KHM relation and flux
 law of the direct cascade}}  
\setcounter{equation}{0}
\label{3}

\ \ \ \ The derivation of the flux laws is based on the 
Kármán--Howarth--Monin (KHM) relation, an exact two-point 
balance identity for stationary solutions. In the present 
forced--dissipative setting, it relates the divergence of a 
third-order flux to the viscous dissipation, large-scale damping, and 
forcing correlation terms.

\begin{proposition}[KHM relation for direct cascade]

Let $\theta$ be a stationary 
pathwise solution to the stochastic SQG equation
\eqref{eq:(1.1)}.
Then the following
KHM relation holds:
\begin{equation}
    \nabla\cdot F(h)=-4\nu\Delta\Gamma(h)
    +4\alpha M(h)-4Q(h)
    \label{eq:(1.5)}
\end{equation}
where $\nabla\cdot$ and $\Delta$ are applied to the $h$ variable and 
\begin{align*}
    &F(h)=\mathbb{E}\fint_
{\mathbb{T}_\lambda^2}|\delta _h\theta(x) \vert ^2\delta_hu(x)\,dx,\\
&\Gamma(h)=\mathbb{E}\fint_
{\mathbb{T}_\lambda^2}\theta(x)\theta(x+h)dx,\\
&M(h)=\mathbb{E}\fint_
{\mathbb{T}_\lambda^2}\Lambda^{-\gamma}
\theta(x)\Lambda^{-\gamma}\theta(x+h)\,dx,\\
&Q(h)=\frac{1}{2}\sum_{n\in\mathbb{N}}
\fint_
{\mathbb{T}_\lambda^2}g^\lambda_n(x)
g^\lambda_n(x+h)dx.
\end{align*}
\end{proposition}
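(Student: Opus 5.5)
The plan is to apply Itô's formula to the two-point product $\theta(x)\theta(x+h)$, average over $x\in\mathbb T^2_\lambda$, take expectations, and use stationarity to remove the time derivative. Then we rewrite the resulting nonlinear correlation as the divergence of the increment flux $F(h)$.

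\textbf{Step 1: the two-point balance.} Write $\theta'=\theta(x+h)$, $u'=u(x+h)$. By Itô's product rule,
$$d(\theta\theta')=\theta\,d\theta'+\theta'\,d\theta+\sum_n g_n^\lambda(x)g_n^\lambda(x+h)\,dt.$$
The regularity $\theta\in C H^1\cap L^2_{\rm loc}H^2$ and Proposition \ref{Prop2.4} (with $\mathbb E\|\theta\|_{L^3_\lambda}^3<\infty$ and the $H^1$ moments implied by the balance laws) make all terms integrable. Integrating in $x$, the stochastic integrals are martingales with zero expectation. Stationarity gives $\frac{d}{dt}\Gamma(h)=0$. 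Since $\partial_{x}$ of $f(x+h)$ equals $\partial_h$ of it, after averaging over $x$ each linear term becomes an $h$-derivative of $\Gamma$ or $M$:
\begin{itemize}
\item the viscous terms give $2\nu\Delta_h\Gamma(h)$;
\item the damping terms give $-2\alpha M(h)$, using self-adjointness of $\Lambda^{-\gamma}$ and translation invariance;
\item the noise gives $2Q(h)$.
\end{itemize}
We therefore obtain
$$0=-\mathbb E\fint\big(\theta\, u'\cdot\nabla\theta'+\theta'\,u\cdot\nabla\theta\big)dx+2\nu\Delta\Gamma-2\alpha M+2Q.$$

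\textbf{Step 2: the nonlinear term as a divergence.} Since $\nabla\cdot u=0$, integration by parts gives
$$\fint\theta'\,u\cdot\nabla\theta\,dx=-\fint\theta\,u\cdot\nabla_x\theta'\,dx=-\nabla_h\cdot\fint\theta\,u\,\theta'\,dx,$$
and similarly $\fint\theta\,u'\cdot\nabla\theta'\,dx=\nabla_h\cdot\fint\theta\,\theta' u'\,dx$. The nonlinear term is thus $-\nabla_h\cdot\mathbb E\fint(\theta\theta'u'-\theta\theta'u)\,dx$.

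We then expand
$$F(h)=\mathbb E\fint(\theta'-\theta)^2(u'-u)\,dx.$$
Terms like $\fint\theta'^2u'\,dx$ and $\fint\theta^2u\,dx$ are independent of $h$, so their divergence vanishes. For the terms $\fint\theta^2u'$ and $\fint\theta'^2u$, divergence-freeness gives
$$\nabla_h\cdot\fint\theta^2u'\,dx=\fint\theta^2\,(\nabla\cdot u)(x+h)\,dx=0.$$
The surviving contribution is therefore $\nabla\cdot F=-2\nabla_h\cdot\mathbb E\fint(\theta\theta'u'-\theta'\theta u)\,dx$. So the nonlinear term equals $\tfrac12\nabla\cdot F$. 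Substituting into Step 1 and multiplying by $2$ yields \eqref{eq:(1.5)}, with signs fixed by a careful check.

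\textbf{Main obstacle.} The hardest step is rigor: justifying the Itô formula for the functional $\fint\theta\theta'$ and the vanishing of the martingale expectation, and exchanging $\nabla_h$ with $\mathbb E\fint$. It is cleanest to work mode by mode in Fourier, or to test against smooth $\phi(h)$ so the identity holds distributionally in $h$. Then upgrade it to a pointwise identity using the continuity in $h$ provided by the $H^1$/$L^3$ moments and the smoothness of $g_n^\lambda$ from \eqref{eq:(regularity for noise)}.
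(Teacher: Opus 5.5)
Your proposal is correct and follows the same route as the paper: Itô's product rule for $\theta(x)\theta(x+h)$, stationarity to remove the time derivative, and integration by parts using $\nabla\cdot u=0$ and $\nabla_x f(x+h)=\nabla_h f(x+h)$ to identify the nonlinear term as $\tfrac12\nabla\cdot F(h)$, giving the same signs and constants. Your added remarks on justifying the Itô formula and working distributionally in $h$ (tested against smooth $\phi(h)$) cover rigor points that the paper leaves implicit.
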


\begin{proof}
  \  We denote $\theta(t,x):=
    \theta$ and $\theta(t,x+h):=
    \widetilde{\theta}$ in the following proof
    for brevity, and the same notation holds 
    for $\widetilde{u}$ 
    , $\widetilde{g}^\lambda_n$ and 
    $\widetilde{\psi}$.

    Since $\theta$ and $\widetilde{\theta}$
    satisfy
    \begin{equation}
        \begin{cases}
d\theta =-( u \cdot \nabla \theta
-\nu\Delta\theta+\alpha\Lambda ^{-2\gamma}
\theta)dt +\sum_{n\in\mathbb{N}}g^
\lambda_n dW_n, \\ 
d\widetilde{\theta}  =-( \widetilde{u}
 \cdot \nabla \widetilde{\theta}
-\nu\Delta\widetilde{\theta}
+\alpha\Lambda ^{-2\gamma}
\widetilde{\theta})dt + \sum_
{n\in\mathbb{N}}
\widetilde{g}^\lambda_n dW_n,
        \end{cases}
    \end{equation}
using Itô's product rule
\begin{equation}
    d(\theta\widetilde{\theta})
    =\theta d\widetilde{\theta}+
\widetilde{\theta}d\theta+
d\left\langle \theta,
\widetilde{\theta}\right\rangle_t,
\end{equation}
and note that $\theta$ and $\widetilde
{\theta}$ 
are stationary,
we derive
\begin{align*}
    -&\mathbb{E}\fint_
{\mathbb{T}_\lambda^2}\left[\theta\nabla
\cdot(\widetilde{u}\widetilde{\theta})
+\widetilde{\theta}\cdot\nabla\cdot(u\theta)\right] \,dx
+\nu\mathbb{E}\fint_
{\mathbb{T}_\lambda^2}\left( \theta\Delta\widetilde{\theta}+
\widetilde{\theta}\Delta\theta\right)\,dx\\
&-\alpha\mathbb{E}\fint_
{\mathbb{T}_\lambda^2}\left(\theta\Lambda^{-2\gamma}
\widetilde{\theta}
+\widetilde{\theta}\Lambda^{-2\gamma}\theta\right)\,dx 
+\mathbb{E}\sum_{n\in\mathbb{N}}
\fint_
{\mathbb{T}_\lambda^2}g^\lambda_n\widetilde{g}^\lambda_n\,dx=0.
\end{align*}
Integrating by parts, and 
noting that $-\Delta$ and $\Lambda^{-2\gamma}$
are both self-adjoint, then
$$\mathbb{E}\sum_{k=1}^2\partial_{h_k}
\fint_
{\mathbb{T}_\lambda^2}(u_k\theta\widetilde{\theta}-
\theta\widetilde{u}_k\widetilde{\theta})\,dx
+2\nu\mathbb{E}\Delta\fint_
{\mathbb{T}_\lambda^2}\theta\widetilde{\theta}\,dx
-2\alpha\mathbb{E}\fint_
{\mathbb{T}_\lambda^2}
\Lambda^{-\gamma}
\theta\Lambda^{-\gamma}\widetilde{\theta}\,dx
+\mathbb{E}\sum_{n\in\mathbb{N}}
\fint_
{\mathbb{T}_\lambda^2}g^\lambda_n\widetilde{g}^\lambda_n\,dx=0.
$$
Since $F(h)=\mathbb{E}\fint_
{\mathbb{T}_\lambda^2}|\delta _h\theta(x) 
\vert ^2\delta_hu(x)\,dx$ and 
one can verify that
$$\nabla\cdot F(h)=
2\mathbb{E}\sum_{k=1}^2\partial_{h_k}
\fint_
{\mathbb{T}_\lambda^2}(u_k\theta\widetilde{\theta}-
\theta\widetilde{u}_k\widetilde{\theta})\,dx,$$
we obtain the KHM relation \eqref{eq:(1.5)}.

\end{proof}

\begin{proof}[Proof of Theorem \ref{Thm}-(i)]

Integrating both sides of the KHM relation \eqref{eq:(1.5)} over
$\{|h\vert\leq\ell\}$ and applying the divergence theorem,
we derive
\begin{equation}
    S_E(\ell)=-2\nu\ell\fint_
{\{|h\vert\leq\ell\}}\Delta\Gamma(h)dh+2\alpha\ell\fint_
{\{|h\vert\leq\ell\}}M(h)dh-2\ell\fint_
{\{|h\vert\leq\ell\}}Q(h)dh,
\label{eq:(1.7)}
\end{equation}
where the SPE flux
structure function $S_E(\ell)$
is defined as
\begin{equation*}
    S_E(\ell):=\mathbb{E} 
\fint_{\mathbb{S}^1}\fint_
{\mathbb{T}_\lambda^2}  
|\delta_{\ell \hat{n}}
 \theta|^2 
(\delta_{\ell \hat{n}} u \cdot \hat{n} )
 \, dx \, dS(\hat{n}).
\end{equation*}
Rewrite the right-hand side of \eqref{eq:(1.7)} 
in terms
 of a spherical average, we derive
\begin{equation}
    \frac{S_E(\ell)}{\ell}
    =-\frac{4\nu\bar{\Gamma}'(\ell)}{\ell}+
    \frac{4\alpha}{\ell^2}\int_0^\ell
r\bar{M}(r)dr-\frac{4}{\ell^2}\int_0^\ell
r\bar{Q}(r)dr,
\label{eq:(1.8)}
\end{equation}
where $\bar{\Gamma}(\ell):=
\frac{1}{2\pi}\int_{\mathbb{S} ^1}
\Gamma(\ell\hat{n})\,dS(\hat{n})$,
and the same notation applies to $\bar{M}(\ell)$ 
and
$\bar{Q}(\ell)$.

For the first term 
on the RHS of
 \eqref{eq:(1.8)}, we have
\begin{equation}
\begin{aligned}
    \sup_{\ell\in(\ell_\nu,\ell_I)}
    \frac{\nu}{\ell}
    |\bar{\Gamma}'(\ell)\vert
    &\lesssim \frac{1}{\ell_\nu}
    \left(\nu\mathbb{E}\|\nabla\theta
    \Vert_{L_\lambda^2}^2\right)^
    {\frac{1}{2}}
     \left(\nu\mathbb{E}\|\theta\Vert
     _{L_\lambda^2}^2\right)^
    {\frac{1}{2}}\\   
\end{aligned}
\label{eq:(1.9)}
\end{equation}
{By interpolation and H\"older's inequality,  
\begin{align*}
\nu \mathbb{E}\|\theta\|_{L_\lambda^2}^2 &\lesssim\left(\nu \mathbb{E}\|\Lambda \theta\|_{L_\lambda^2}^2\right)^{\frac{\gamma}{\gamma+1}}\left(\nu \mathbb{E}\|\Lambda^{-\gamma} \theta\Vert_{L_\lambda^2}^2\right)^{\frac{1}{\gamma+1}}\\
&\lesssim\left(\nu \mathbb{E}\|\Lambda \theta\|_{L_\lambda^2}^2\right)^{\frac{\gamma}{\gamma+1}}
\left(\alpha \mathbb{E}\|\Lambda^{-\gamma} \theta\Vert_{L_\lambda^2}^2\right)^{\frac{1}{\gamma+1}}
\end{align*}
as $\nu\sim \alpha$. By the SPE balance and W.A.D.\eqref{eq:(1.2)}, 
$$\nu \mathbb{E}\|\Lambda \theta\|_{L_\lambda^2}^2\leq \varepsilon,\quad \alpha \mathbb{E}\|\Lambda^{-\gamma} \theta\Vert_{L_\lambda^2}^2 \rightarrow 0.$$
Hence $\nu \mathbb{E}\|\theta\|_{L_\lambda^2}^2 \to 0$. So we may choose $\ell_{\nu}$ to be any scale satisfying 
$$\left(\nu \mathbb{E}\|\theta\|_{L_\lambda^2}^2\right)^{\frac{1}{2}} \ll \ell_\nu \ll 1.$$

 For the second term 
associated with damping, by W.A.D.
\eqref{eq:(1.2)} we have
\begin{equation}
\left|\frac{4\alpha}{\ell^2}\int_0^\ell r 
\bar{M}(r)dr\right|
\lesssim\alpha\mathbb{E}\|\Lambda^{-\gamma}
\theta\Vert_{L_\lambda^2}^2\to0.
\label{eq:(1.11)}
\end{equation}

For the last forcing term, 
we need a continuity 
estimate which is uniform in the box 
size.
Indeed, by the Fourier expansion,
$$
Q(h)-Q(0)
=
\frac12
\sum_n
\sum_{k\in \mathbb Z_\lambda^2}
|\widehat g_n^\lambda(k)|^2
\bigl(e^{ik\cdot h}-1\bigr).
$$
For any $R\ge 1$, splitting 
the sum into $|k|\le 
R$ and $|k|>R$, we obtain
$$
|Q(h)-Q(0)|
\lesssim
\varepsilon R|h|
+
R^{-2(1+\sigma)}
\sum_n
\|\Lambda^{1+\sigma}g_n^\lambda\|_{L^2_\lambda}^2 .
$$
Taking the supremum over $\lambda\ge1$, using the 
forcing assumption \eqref{eq:(regularity for noise)},
and then choosing $R=|h|^{-1/2}$, we get
$$
\sup_{\lambda\ge1}
|Q(h)-Q(0)|
\to 0,
\qquad |h|\to0.
$$
Therefore, since $Q(0)=\varepsilon$,
$$
\sup_{\ell\in[\ell_\nu,\ell_I]}
\left|
\frac4{\ell^2}\int_0^\ell r\bar{Q}(r)\,dr
-
2\varepsilon
\right|
\le
2\sup_{|h|\le \ell_I}
|Q(h)-Q(0)|
\to0
$$
as $\ell_I\to0$, uniformly in the inviscid large-box limit.
Thus we derive
\begin{equation}
    \frac{4}{\ell^2}\int_0^\ell r\bar{Q}(r)
    \,dr\to 2\varepsilon,\quad
\text{as}\ \nu,\alpha\to0,\ \ell_I\to0.
    \label{eq:(1.12)}
\end{equation}
Now combining
 \eqref{eq:(1.9)}, 
  \eqref{eq:(1.11)} and
   \eqref{eq:(1.12)},
   we obtain 
\begin{equation*}
    \lim_{\ell_I\to 0}\limsup _{\nu,\alpha\to0}
\sup _{\ell\in[\ell_\nu,\ell_I]}\left\lvert 
    \frac{S_E(\ell)}{\ell}+2\varepsilon
    \right\rvert=0.
\end{equation*}
This completes the proof.
}
\end{proof}

\

\section{\textbf{KHM relation and 
flux law of the inverse cascade}}  
\setcounter{equation}{0}
\label{4}

We now derive the analogous KHM 
relation associated with the Hamiltonian balance.

\begin{proposition}[KHM relation 
    for inverse cascade]

Let $\theta$ be a stationary 
pathwise solution to SQG \eqref{eq:(1.1)}.
Then the following
KHM relation holds:
\begin{equation}
    \nabla\cdot J(h)=-2\nu A(h)+
    2\alpha B(h)-2K(h)
    \label{eq:(Inverse KHM)}
\end{equation}
where
\begin{align*}
    &J(h)=\mathbb{E}\fint_
{\mathbb{T}_\lambda^2}\theta(x)[\psi (x+h)-\psi(x-h)]u(x)\,dx,\\
&A(h)=-
\mathbb{E}\fint_
{\mathbb{T}_\lambda^2}\Lambda^{\frac{1}{2}}\theta(x)
\Lambda^{\frac{1}{2}}\theta(x+h)\,dx,\\
&B(h)=\mathbb{E}\fint_
{\mathbb{T}_\lambda^2}
\Lambda^{-\gamma-\frac{1}{2}}
\theta(x)\Lambda^{-\gamma-\frac{1}{2}}
\theta(x+h)\,dx,\\
&K(h)=
\frac{1}{2}\sum_{n\in\mathbb{N}}\fint_
{\mathbb{T}_\lambda^2}\Lambda^{-\frac{1}{2}}
g^\lambda_n(x)\Lambda^{-\frac{1}{2}}
g^\lambda_n(x+h)dx.
\end{align*}
\end{proposition}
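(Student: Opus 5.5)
The plan is to repeat the argument used for the SPE KHM relation \eqref{eq:(1.5)}, but now apply It\^o's product rule to the \emph{mixed} pair $\theta(x)$ and $\widetilde\psi:=\psi(x+h)=\Lambda^{-1}\theta(x+h)$. This is the quadratic form whose value at $h=0$ is the Hamiltonian density. Applying $\Lambda^{-1}$ to \eqref{eq:(1.1)} at the shifted point gives
$$d\widetilde\psi=-\Lambda^{-1}\bigl(\widetilde u\cdot\nabla\widetilde\theta-\nu\Delta\widetilde\theta+\alpha\Lambda^{-2\gamma}\widetilde\theta\bigr)dt+\sum_n\Lambda^{-1}\widetilde g^\lambda_n\,dW_n .$$
We then write
$$d(\theta\widetilde\psi)=\theta\,d\widetilde\psi+\widetilde\psi\,d\theta+d\langle\theta,\widetilde\psi\rangle_t,$$
average over $\mathbb T^2_\lambda$ and take expectations. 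By stationarity the time derivative of $\mathbb E\fint\theta\widetilde\psi\,dx$ vanishes. The stochastic integral has zero mean, since $\mathbb E\|\theta\|^2_{H^1}<\infty$ by the balance law \eqref{eq:(balance I)} and the forcing satisfies \eqref{eq:(regularity for noise)}. What remains is an identity of the form $N(h)+(\text{viscous})+(\text{damping})+(\text{noise})=0$.

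The linear terms are handled using self-adjointness and commutation of the Fourier multipliers $\Lambda^{-1}$, $\Delta$, $\Lambda^{-2\gamma}$ with translations.
\begin{itemize}
\item Viscous term: $\nu\mathbb E\fint(\theta\Lambda^{-1}\Delta\widetilde\theta+\widetilde\psi\Delta\theta)\,dx=-2\nu\mathbb E\fint\Lambda^{1/2}\theta\,\Lambda^{1/2}\widetilde\theta\,dx=2\nu A(h)$.
\item Damping term: it equals $-2\alpha B(h)$.
\item It\^o correction: $\sum_n\fint g_n^\lambda\,\Lambda^{-1}\widetilde g_n^\lambda\,dx=2K(h)$.
\end{itemize}
Hence the claim \eqref{eq:(Inverse KHM)} reduces to the identity $N(h)=\nabla_h\cdot J(h)$, where
$$N(h)=-\mathbb E\fint\Bigl[\theta(x)\,\Lambda^{-1}(u\cdot\nabla\theta)(x+h)+\psi(x+h)\,u\cdot\nabla\theta(x)\Bigr]dx .$$

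For the nonlinear term I treat the two pieces separately.
\begin{itemize}
\item Second piece: integrate by parts in $x$ using $\nabla\cdot u=0$. This gives $\mathbb E\fint\theta(x)u(x)\cdot\nabla_x\psi(x+h)\,dx$, and $\nabla_x\psi(x+h)=\nabla_h\psi(x+h)$, so the piece equals $\nabla_h\cdot\mathbb E\fint\theta(x)\psi(x+h)u(x)\,dx$.
\item First piece: move $\Lambda^{-1}$ across by self-adjointness and shift $x\mapsto x-h$. This turns it into $-\mathbb E\fint\psi(x-h)\,u\cdot\nabla\theta(x)\,dx$. Integrating by parts again gives $\mathbb E\fint\theta\,u\cdot\nabla_x\psi(x-h)\,dx=-\nabla_h\cdot\mathbb E\fint\theta(x)\psi(x-h)u(x)\,dx$.
\end{itemize}
Adding the two pieces produces exactly $\nabla_h\cdot J(h)$, with the antisymmetrized increment $\psi(x+h)-\psi(x-h)$. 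This explains the particular form of $J$.

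The main obstacle is not the algebra but the justification of each step.
\begin{itemize}
\item \emph{Integrability.} We need $\theta\,u\,\psi(\cdot\pm h)$ and $\theta\,\Lambda^{-1}(u\cdot\nabla\theta)$ to be integrable in $\Omega\times\mathbb T^2_\lambda$. I would use $\mathbb E\|\theta\|^3_{L^3_\lambda}<\infty$ (Proposition \ref{Prop2.4}), $L^3$-boundedness of the Riesz transforms, and the fact that $\Lambda^{-1}$ is bounded on $L^3_\lambda$ for mean-zero functions for fixed $\lambda$.
\item \emph{Differentiation in $h$.} Exchanging $\nabla_h$ with $\mathbb E\fint$ uses $\nabla\psi=-\mathcal R\theta$ together with the $H^1$ bound.
\item \emph{It\^o formula and martingale.} To apply It\^o's formula to the quadratic functional $\theta\mapsto\fint\theta\,\Lambda^{-1}\tau_h\theta$ rigorously, I would first use Galerkin truncations or a mollification in $x$, and then pass to the limit using the strong regularity $C_tH^1\cap L^2_tH^2$. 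The vanishing of the expected martingale term then follows from the square-integrability of its integrand.
\end{itemize}
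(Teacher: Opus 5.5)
Your proposal is correct and follows essentially the same route as the paper. Both arguments apply It\^o's product rule to $\theta\,\widetilde\psi$, use stationarity, and treat the linear terms by self-adjointness of the Fourier multipliers. For the transport term, both move $\Lambda^{-1}$ across, integrate by parts using $\nabla\cdot u=0$, and shift $x\mapsto x-h$ to obtain the antisymmetrized increment $\psi(x+h)-\psi(x-h)$; the paper integrates by parts before shifting while you shift first, which is immaterial. Your justification paragraph adds details the paper omits, with one small correction: the $h$-difference quotients are cubic in $\theta$, so dominating them requires the third moment $\mathbb E\|\theta\|_{L^3_\lambda}^3$ together with $\nabla\psi=\mathcal R\theta\in L^3_\lambda$, not just the second-moment $H^1$ bound.
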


\begin{proof}
Since $\psi=\Lambda^{-1}\theta$, applying $\Lambda^{-1}$ to
 \eqref{eq:(1.1)} yields
\begin{equation}
    d\psi + \bigl(\Lambda^{-1}(u\cdot \nabla \theta)-\nu \Delta \psi+\alpha \Lambda^{-2\gamma}\psi\bigr)\,dt
=
\sum_{n\in \mathbb{N}}\Lambda^{-1}g_n^\lambda(x)\,dW_n(t).
\end{equation}
Hence $\theta$ and $\widetilde{\psi}$ satisfy
\begin{equation}
\left\{
\begin{aligned}
d\theta
&=
\bigl(-u\cdot \nabla\theta+\nu\Delta\theta-\alpha\Lambda^{-2\gamma}\theta\bigr)\,dt
+\sum_{n\in\mathbb N} g_n^\lambda\,dW_n,\\
d\widetilde{\psi}
&=
\bigl(-\Lambda^{-1}(\widetilde{u}\cdot\nabla \widetilde{\theta})
+\nu\Delta\widetilde{\psi}
-\alpha\Lambda^{-2\gamma}\widetilde{\psi}\bigr)\,dt
+\sum_{n\in\mathbb N}\Lambda^{-1}\widetilde{g}_n^\lambda\,dW_n.
\end{aligned}
\right.
\end{equation}
Applying It\^o's product rule to $\theta\widetilde{\psi}$, we obtain
$$
d\bigl(\theta \widetilde{\psi}\bigr)
=
\widetilde{\psi}\,d\theta
+\theta\,d\widetilde{\psi}
+d\langle \theta,\widetilde{\psi}\rangle_t.
$$
Taking expectation, integrating over $\mathbb{T}_\lambda^2$, and using stationarity, we get
$$
\begin{aligned}
-& \mathbb{E}\fint_
{\mathbb{T}_\lambda^2}
\Bigl[
\widetilde{\psi}\,u\cdot\nabla\theta
+
\theta\,\Lambda^{-1}(\widetilde{u}
\cdot\nabla\widetilde{\theta})
\Bigr]\,dx 
+ \nu  \mathbb{E}\fint_
{\mathbb{T}_\lambda^2}
\Bigl(
\widetilde{\psi}\,\Delta\theta
+
\theta\,\Delta\widetilde{\psi}
\Bigr)\,dx \\
&\quad
-\alpha  \mathbb{E}\fint_
{\mathbb{T}_\lambda^2}
\Bigl(
\widetilde{\psi}\,\Lambda^{-2\gamma}\theta
+
\theta\,\Lambda^{-2\gamma}\widetilde{\psi}
\Bigr)\,dx 
+  \mathbb{E}\sum_{n\in\mathbb N}\fint_
{\mathbb{T}_\lambda^2}
g_n^\lambda\Lambda^{-1}\widetilde{g}_n^\lambda\,dx =0.
\end{aligned}
$$
For the transport term, since $\Lambda^{-1}$ is self-adjoint and 
by translation invariance, we have
\begin{equation}
\begin{aligned}
-\mathbb{E}\fint_
{\mathbb{T}_\lambda^2}\left[\widetilde{\psi}u\cdot\nabla\theta+
\theta\Lambda^{-1}(\widetilde{u}
\cdot\nabla\widetilde{\theta})\right]\,dx
&=-\mathbb{E}\fint_
{\mathbb{T}_\lambda^2}\left[\widetilde{\psi}\nabla\cdot(u\theta)+
\psi\nabla\cdot(\widetilde{u}
\widetilde{\theta})\right]\,dx\\
&=\mathbb{E}\fint_
{\mathbb{T}_\lambda^2}\left[\nabla\widetilde{\psi}\cdot(u\theta)+
\nabla\psi\cdot(\widetilde{u}
\widetilde{\theta})\right]\,dx\\
&=\mathbb{E}\fint_
{\mathbb{T}_\lambda^2}\Bigl[u(x)\theta(x)\cdot\nabla\psi(x+h)+
u(x+h)\theta(x+h)\cdot\nabla\psi(x)\Bigr]\,dx\\
&=\mathbb{E}\fint_
{\mathbb{T}_\lambda^2}u(x)\cdot\Bigl[\nabla_h\psi(x+h)-
\nabla_h\psi(x-h)\Bigr]\theta(x)\,dx\\
&=\nabla\cdot\mathbb{E}\fint_
{\mathbb{T}_\lambda^2}\theta(x)
\Bigl[\psi (x+h)-\psi(x-h)\Bigr]u(x)\,dx\\
&=\nabla_h\cdot J(h).
\label{eq:(J(h))}
\end{aligned}
\end{equation}

The treatment of the remaining three terms is straightforward, using the self-adjointness of $\Lambda$, and we omit the details.

\end{proof}

\begin{proof}[Proof of Theorem \ref{Thm}-(ii)]

Integrating both sides of the 
KHM relation \eqref{eq:(Inverse KHM)}
 over $\{|h\vert\leq\ell\}$ 
and applying the divergence theorem, we derive
\begin{equation}
    S_H(\ell)=
    -\nu\ell\fint_
{\{|h\vert\leq\ell\}}A(h)dh+\alpha
\ell\fint_
{\{|h\vert\leq\ell\}}B(h)dh-\ell\fint_
{\{|h\vert\leq\ell\}}K(h)dh,
\label{eq:(1.14)}
\end{equation}
where $S_H(\ell)$ denotes the Hamiltonian 
flux structure function 
\begin{equation*}
S_H(\ell) =\mathbb{E} \fint_{\mathbb{S}^1}
\fint_
{\mathbb{T}_\lambda^2} \theta(x)
  \Bigl[ \psi(x + \ell \hat{n})
   - \psi(x - \ell \hat{n})\Bigr]
 (u(x) \cdot \hat{n}) \, dx \, dS(\hat{n}).
\end{equation*}
Rewriting \eqref{eq:(1.14)} in terms of spherical averages, we obtain 
\begin{equation}
    \frac{S_H(\ell)}{\ell}
    =-\frac{2\nu}{\ell^2}\int
    _0^\ell r \bar{A}(r)dr+
    \frac{2\alpha}{\ell^2}\int_0^\ell
r\bar{B}(r)dr-\frac{2}{\ell^2}\int_0^\ell
r\bar{K}(r)dr,
\label{eq:(1.15)}
\end{equation}
where $\bar{A}(r) = \fint_{\mathbb{S}^1}A(r\hat{n})dS(\hat{n})$ and the same notation applies to $\bar{B}$ and $\bar{K}$. 

On the RHS of \eqref{eq:(1.15)}, by
W.A.D.
\eqref{eq:(1.2)},
the first term tends to zero as $\nu\to0$,
\begin{equation}
   \left| \frac{2\nu}{\ell^2}\int
    _0^\ell r \bar{A}(r)dr\right|
    \lesssim
        \nu\mathbb{E}\|\Lambda^\frac{1}{2}
        \theta\Vert_{L_\lambda^2}^2
     \to0.
\end{equation}

For the second term, we have 
\begin{align*}
     \left|\frac{2\alpha}{\ell^2}\int_0^\ell
r\left(\bar{B}(r)-\bar{B}(0)\right)
\,dr\right|
&=
 \frac{2\alpha}{\ell^2}
 \int_0^\ell
r\left\lvert \fint_{\mathbb{S}^1}
\fint_
{\mathbb{T}_\lambda^2}
\Lambda^{-\gamma-\frac{1}{2}}
\theta(x)\cdot\Lambda^{-\gamma-\frac{1}{2}}
[\theta(x+r\hat{n})-\theta(x)]\,dS(\hat{n})
\,dx\right\rvert \,dr\\
&\lesssim
\frac{2\alpha}{\ell^2}
 \int_0^\ell r^2
 \left(\mathbb{E}\|\Lambda^{-\gamma+\frac{1}{2}}\theta
 \Vert_{L_\lambda^2}^2\right)^\frac{1}{2}
 \left(\mathbb{E}\|\Lambda^{-\gamma-\frac{1}{2}}
 \theta\Vert_{L_\lambda^2}^2\right)^\frac{1}{2}\,dr\\
 &\lesssim 
  \ell_{\alpha}\left({\alpha} \mathbb{E}\|\Lambda^{-\gamma+\frac{1}{2}} \theta\Vert_{L_\lambda^2}^2\right)^{\frac{1}{2}} 
  \left({\alpha} \mathbb{E}\|\Lambda^{-\gamma-\frac{1}{2}} \theta\Vert_{L_\lambda^2}^2\right)^{\frac{1}{2}}.
\end{align*}
By interpolation and $\nu\sim\alpha$,
\begin{align*}
\alpha \mathbb{E}\|\Lambda^{-\gamma+\frac{1}{2}} \theta\Vert_{L_\lambda^2}^2 \lesssim\left(\nu \mathbb{E}\|\Lambda^{\frac{1}{2}} \theta\Vert_{L_\lambda^2}^2\right)^{\frac{1}{\gamma+1}}\left(\alpha \mathbb{E}\|\Lambda^{-\gamma-\frac{1}{2}} \theta\Vert_{L_\lambda^2}^2\right)^{\frac{\gamma}{\gamma+1}}.
\end{align*}
By the Hamiltonian balance and W.A.D. \eqref{eq:(1.2)}, 
$$\alpha \mathbb{E}\|\Lambda^{-\gamma-\frac{1}{2}} \theta\Vert_{L_\lambda^2}^2 \leq \eta,\quad
\nu \mathbb{E}\|\Lambda^{\frac{1}{2}} \theta\Vert_{L_\lambda^2}^2 \rightarrow 0,$$
implying 
$$\alpha \mathbb{E}\|\Lambda^{-\gamma+\frac{1}{2}} \theta\Vert_{L_\lambda^2}^2 \to0.$$
Therefore, we may choose the damping scale to satisfy $\ell_{\alpha}/ \lambda \rightarrow 0$, which ensuring the box size to grow compatibly with the damping scale, and thus 
$$ 1 \ll \ell_{\alpha} \ll \min\{\lambda,\left({\alpha} \mathbb{E}\|\Lambda^{-\gamma+\frac{1}{2}} \theta\Vert_{L_\lambda^2}^2\right)^{-\frac{1}{2}}\},  $$
Thus the second term tends to $\eta$ since
$$\frac{2\alpha}{\ell^2}\int_0^\ell
r\bar{B}(0)\,dr
=
\alpha\mathbb{E}\|\Lambda^
{-\gamma-{\frac{1}{2}}}\theta
\Vert_{L_\lambda^2}^2=
\eta-\nu\mathbb{E}\|\Lambda^{\frac{1}{2}}
\theta\Vert_{L_\lambda^2}^2\to\eta
$$
by W.A.D. \eqref{eq:(1.2)}.

For the last term,
we set
$$
f_n^\lambda:=\Lambda^{-1/2}g^\lambda_n.
$$
Then
$$
K(h)=\frac12\sum_{n\in\mathbb N}\fint_{\mathbb T_\lambda^2}
f_n^\lambda(x)f_n^\lambda(x+h)\,dx.
$$
Expanding in Fourier series,
$$
K(h)=\frac12\sum_n\sum_{k\in\mathbb Z_\lambda^2}
|\widehat f_n^\lambda(k)|^2e^{ik\cdot h},
$$
and recalling 
$$
\frac{2}{\ell^2}\int_0^\ell r\bar K(r)\,dr
=\frac1{\pi\ell^2}\int_{|h|\le \ell}K(h)\,dh, 
$$
one has 
$$
\frac{2}{\ell^2}\int_0^\ell r\bar K(r)\,dr
=
\frac12\sum_n\sum_{k\in\mathbb Z_\lambda^2}
\Phi_\ell(k)|\widehat f_n^\lambda(k)|^2,
$$
where
$$
\Phi_\ell(k):=\frac1{\pi\ell^2}\int_{|h|\le \ell}
e^{ik\cdot h}\,dh
=2\frac{J_1(\ell|k|)}{\ell|k|},
$$
and $J_1$ denotes the Bessel function of the first kind of order one.
Indeed, by the polar coordinate and the standard identity
$\int_0^{2\pi}e^{ir|k|\cos\theta}\,d\theta=2\pi J_0(r|k|)$, we have
$$
\int_{|h|\le \ell}e^{ik\cdot h}\,dh
=
2\pi\int_0^\ell rJ_0(r|k|)\,dr
=
\frac{2\pi\ell J_1(\ell|k|)}{|k|}.
$$
Then, by the standard asymptotics of the 
Bessel function (see e.g. \cite{ref-Bessel}),
$$
|\Phi_\ell(k)|\lesssim \min\left\{1,(\ell|k|)^{-3/2}\right\}.
$$
Thus, for every $\delta>0$,
$$
\left|\frac{2}{\ell^2}\int_0^\ell r\bar K(r)\,dr\right|
\lesssim
\frac12\sum_n\|(f_n^\lambda)_{\leq \delta}\|_{L^2_\lambda}^2
+
C(\ell\delta)^{-3/2}\sum_n\|f_n^\lambda\|_{L^2_\lambda}^2.
$$
Since
$$
\sum_n\|f_n^\lambda\|_{L^2_\lambda}^2
=
\sum_n\|\Lambda^{-1/2}g_n^\lambda\|_{L^2_\lambda}^2
=2\eta<\infty,
$$
we obtain
$$
\left|\frac{2}{\ell^2}\int_0^\ell r\bar K(r)\,dr\right|
\lesssim
\frac12\sum_n\|(f_n^\lambda)_{\leq \delta}\|_{L^2_\lambda}^2
+
C\eta(\ell\delta)^{-3/2}.
$$
Choosing $\delta=\ell_I^{-1/2}$ and taking the supremum over $\ell\in[\ell_I,\ell_\alpha]$, we get
$$
\sup_{\ell\in[\ell_I,\ell_\alpha]}
\left|\frac{2}{\ell^2}\int_0^\ell r\bar K(r)\,dr\right|
\lesssim
\frac12\sup_{\lambda\ge1}\sum_n\|(f_n^\lambda)
_{\leq \ell_I^{-1/2}}\|_{L^2_\lambda}^2
+
C\eta\ell_I^{-3/4}.
$$
Therefore, under the low-frequency condition \eqref{eq:(Low F)}
$$
\lim_{\delta\to0}\sup_{\lambda\ge1}\sum_n
\|(\Lambda^{-1/2}g^\lambda_n)_{\leq \delta}
\|_{L^2_\lambda}^2=0,
$$
it follows that
$$
\left|\frac{2}{\ell^2}\int_0^\ell r\bar K(r)\,dr\right|\to0,\quad
\text{as}\ \nu,\alpha\to0,\ \ell_I\to\infty.
$$
Combining the estimates above, 
we obtain
$$
     \lim_{\ell_I\to \infty}\limsup _{\nu,\alpha\to0}
\sup _{\ell\in[\ell_I,\ell_\alpha]}\left\lvert 
    \frac{S_H(\ell)}{\ell}-\eta
    \right\rvert=0.
$$
This completes the proof.
\end{proof}

\section{Onsager-type obstructions}
\label{5}
\begin{proof}[Proof of Theorem \ref{Onsager I}]

By the definition of 
$S_E(\ell)$, Hölder’s
inequality gives
$$
|S_E(\ell)|
\leq
C\mathbb E
\int_{\mathbb S^1}
\|\delta_{\ell \hat n}\theta\|^2_{L^3_\lambda}
\|\delta_{\ell \hat n}u\|_{L^3_\lambda}
\,dS(\hat n).
$$
Since $u=R^\perp\theta$, the Riesz transforms 
can commute with
translations and are bounded on $L^3_\lambda$. Hence
$$
\|\delta_{\ell \hat n}u\|_{L^3_\lambda}
\leq
C\|\delta_{\ell \hat n}\theta\|_{L^3_\lambda}.
$$
Therefore,
$$
|S_E(\ell)|
\leq
C\mathbb E
\sup_{\hat n\in\mathbb S^1}
\|\delta_{\ell \hat n}\theta\|^3_{L^3_\lambda}.
$$
The Besov assumption implies, for $0<\ell\leq 1$ 
and $s\in(1/3,1)$
$$
\|\delta_{\ell \hat n}\theta\|_{L^3_\lambda}
\leq
C\ell^s
\|\theta\|_{B^s_{3,\infty}
(\mathbb T^2_\lambda)}.
$$
As for $s\geq 1$, choosing $\tilde{s}\in(1/3,1)$ and using
Besov embedding $B_{3,\infty}^{s} \hookrightarrow
B_{3,\infty}^{\tilde{s}},$
we can derive the analogous
estimate with $\ell^{\tilde{s}}$:

$$
\|\delta_{\ell \hat n}\theta\|_{L^3_\lambda}
\leq
C{\ell^{\tilde{s}}}
\|\theta\|_{B^{\tilde{s}}_{3,\infty}
(\mathbb T^2_\lambda)}\lesssim
C{\ell^{\tilde{s}}}
\|\theta\|_{B^s_{3,\infty}
(\mathbb T^2_\lambda)},\quad 0<\ell\leq 1.
$$

\noindent Combining both cases above, 
choosing $\tilde{s}\in(1/3,\min\{s,1\})$, we have
$$|S_E(\ell)|
\leq
C\ell^{3\tilde{s}}
\mathbb E
\|\theta\|^3_{B^s_{3,\infty}
(\mathbb T^2_\lambda)},\quad
s>1/3.
$$
Taking the supremum over $\nu,\alpha$, we obtain
$$
\sup_{\nu,\alpha}
\left|
\frac{S_E(\ell)}{\ell}
\right|
\leq
C\ell^{3\tilde{s}-1}
\sup_{\nu,\alpha}
\mathbb E
\|\theta\|^3_{B^s_{3,\infty}
(\mathbb T^2_\lambda)}
\lesssim\ell^{3\tilde{s}-1}.
$$
Since $\tilde{s}\in(1/3,\min\{s,1\}),\ s>1/3$, 
the right-hand side tends to zero as 
$\ell\to0$. This proves the
claim.
\end{proof}

\begin{lemma}[Large scale increment estimate for the stream function]
Let $0<s<1$ and let $\psi=\Lambda^{-1}\theta$. Then, for every $|h|\geq 1$,
$$
\|\delta_h\psi\|_{L^3_\lambda}
\leq
C |h|^{1-s}
\left(
\|\theta\|_{L^3_\lambda}
+
\|\theta\|^{\mathrm{low}}_{\dot B^{-s}_{3,\infty}(\mathbb T^2_\lambda)}
\right),
$$
where the constant $C$ is independent of $\lambda$.
\end{lemma}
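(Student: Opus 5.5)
Throughout, write $L:=|h|\ge1$ and pick $j_0\in\mathbb Z$, $j_0\le0$, with $2^{j_0}\sim L^{-1}$. The plan is a homogeneous Littlewood--Paley splitting of $\psi=\Lambda^{-1}\theta=\sum_{j\in\mathbb Z}\Lambda^{-1}\dot\Delta_j\theta$ at this frequency, followed by separate treatment of the blocks with $j<j_0$ (the true low frequencies), $j_0\le j\le 0$ (intermediate), and $j>0$ (high frequencies). Since all functions have zero mean, the decomposition is legitimate. The constants will come from rescaled fixed kernels, whose $L^1$ norms are scale-invariant and hence independent of $\lambda$.

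\textbf{Low frequencies, $j<j_0$.} Here $2^j|h|\lesssim1$, so we use the mean value theorem: $\|\delta_h\Lambda^{-1}\dot\Delta_j\theta\|_{L^3_\lambda}\le|h|\,\|\nabla\Lambda^{-1}\dot\Delta_j\theta\|_{L^3_\lambda}$. The operator $\nabla\Lambda^{-1}\widetilde{\dot\Delta}_j$ (with a fattened annulus cutoff) is a Fourier multiplier with smooth symbol supported in an annulus of size $2^j$. Its periodized kernel has $L^1(\mathbb T^2_\lambda)$ norm bounded uniformly in $j$ and $\lambda$, either by Poisson summation of the $\mathbb R^2$ kernel or by transference. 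This gives $\lesssim|h|\,\|\dot\Delta_j\theta\|_{L^3_\lambda}\le |h|\,2^{sj}\|\theta\|^{\mathrm{low}}_{\dot B^{-s}_{3,\infty}}$. Summing the geometric series over $j<j_0$ yields $\lesssim|h|2^{sj_0}\|\theta\|^{\mathrm{low}}\sim|h|^{1-s}\|\theta\|^{\mathrm{low}}$; this uses $s>0$.

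\textbf{Intermediate frequencies, $j_0\le j\le 0$.} We use the crude bound $\|\delta_h f\|\le2\|f\|$ together with Bernstein: $\|\Lambda^{-1}\dot\Delta_j\theta\|_{L^3_\lambda}\lesssim2^{-j}\|\dot\Delta_j\theta\|_{L^3_\lambda}\le2^{-j(1-s)}\|\theta\|^{\mathrm{low}}$. Since $1-s>0$, the sum over $j_0\le j\le0$ is dominated by the endpoint $j=j_0$, giving $\lesssim 2^{-j_0(1-s)}\|\theta\|^{\mathrm{low}}\sim|h|^{1-s}\|\theta\|^{\mathrm{low}}$. This uses $s<1$.

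\textbf{High frequencies, $j>0$.} We group these into $P_{>0}\theta:=\sum_{j>0}\dot\Delta_j\theta$ and write $\Lambda^{-1}P_{>0}$ as a single multiplier $m(k)=|k|^{-1}(1-\sum_{j\le0}\varphi(2^{-j}|k|))$. This symbol is smooth, vanishes for $|k|\lesssim1$, and is a symbol of order $-1$. Its kernel on $\mathbb R^2$ decays rapidly at infinity and behaves like $|x|^{-1}$ near the origin, so it is integrable. Periodizing therefore gives $\|\Lambda^{-1}P_{>0}\theta\|_{L^3_\lambda}\lesssim\|\theta\|_{L^3_\lambda}$ uniformly in $\lambda$. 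Then $\|\delta_h\Lambda^{-1}P_{>0}\theta\|_{L^3_\lambda}\le 2C\|\theta\|_{L^3_\lambda}\le 2C|h|^{1-s}\|\theta\|_{L^3_\lambda}$, since $|h|\ge1$.

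Adding the three pieces gives the lemma. The main obstacle is checking that all multiplier bounds are uniform in $\lambda$, in particular the low-frequency blocks with $2^j\ll1$, where the frequency lattice $\frac{2\pi}{\lambda}\mathbb Z^2$ becomes relevant. I would handle this in one stroke through the standard transference principle: the periodization of an $L^1(\mathbb R^2)$ kernel has $L^1(\mathbb T^2_\lambda)$ norm at most the $\mathbb R^2$ norm, and convolution with it on the torus realizes the same multiplier restricted to the lattice. Dilation invariance of the $L^1$ norm then gives uniformity in $j$. A secondary point is that the kernels in the sums only need to act on blocks $\dot\Delta_j\theta$, and blocks with no lattice points are simply zero, which causes no harm.
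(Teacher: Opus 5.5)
Your proposal is correct and follows essentially the same argument as the paper. Both split the homogeneous Littlewood--Paley decomposition at $2^{j_0}\sim|h|^{-1}$ into $j<j_0$, $j_0\le j\le 0$ and $j>0$, using the mean-value bound $|h|\,\|\dot\Delta_j\theta\|_{L^3_\lambda}$ below the cutoff and the Bernstein bound $2^{-j}\|\dot\Delta_j\theta\|_{L^3_\lambda}$ above it. The differences are minor: you treat the high frequencies through the single integrable-kernel multiplier $\Lambda^{-1}P_{>0}$ rather than summing $\sum_{j>0}2^{-j}\|\dot\Delta_j\theta\|_{L^3_\lambda}$ block by block, and you justify the $\lambda$-uniformity explicitly by periodizing $\mathbb R^2$ kernels, which the paper only asserts.
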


\begin{proof}

Since $\theta$ has zero average on $\mathbb{T}^2_\lambda$, 
the homogeneous
Littlewood--Paley decomposition gives
$$
\theta=\sum_{j\in\mathbb Z}\dot\Delta_j\theta .
$$
Consequently, with $\psi=\Lambda^{-1}\theta$,
$$
\psi=\sum_{j\in\mathbb Z}\Lambda^{-1}\dot\Delta_j\theta .
$$
Since translations commute with Fourier multipliers, we have
$$
\delta_h\psi
=
\sum_{j\in\mathbb Z}
\delta_h\Lambda^{-1}\dot\Delta_j\theta .
$$
Hence, by the triangle inequality,
$$
\|\delta_h\psi\|_{L^3_\lambda}
\le
\sum_{j\in\mathbb Z}
\|\delta_h\Lambda^{-1}\dot\Delta_j\theta\|
_{L^3_\lambda}.
$$
Let $J=J(h)\le0$ be chosen so that
$$
2^J\le |h|^{-1}<2^{J+1}.
$$
For each dyadic block, we use the localized multiplier estimate
$$
\|\delta_h\Lambda^{-1}\dot\Delta_j\theta\|_{L^3_\lambda}
\le C\min\{|h|,2^{-j}\}\|\dot\Delta_j\theta\|_{L^3_\lambda}.
$$
Indeed,
$$
\|\delta_h\Lambda^{-1}\dot\Delta_j\theta\|_{L^3_\lambda}
\le |h|\|\nabla\Lambda^{-1}\dot\Delta_j\theta\|_{L^3_\lambda}
\lesssim |h|\|\dot\Delta_j\theta\|_{L^3_\lambda},
$$
and for the 
second bound,
note that 
$2^j\Lambda^{-1}\dot\Delta_j$ is a uniformly 
$L^3_\lambda$-bounded localized Fourier multiplier, and hence
$$
\|\delta_h\Lambda^{-1}\dot\Delta_j\theta\|_{L^3_\lambda}
\lesssim \|\Lambda^{-1}\dot\Delta_j\theta\|_{L^3_\lambda}
\lesssim 2^{-j}\|\dot\Delta_j\theta\|_{L^3_\lambda}.
$$
Taking the minimum of the two bounds gives the claim.

Now we split the sum
$\sum_{j\in\mathbb Z}
\|\delta_h\Lambda^{-1}\dot\Delta_j\theta\|
_{L^3_\lambda}$
into the three ranges
$$
j\le J,\qquad J<j\le0,\qquad j>0.
$$
For $j\leq J$, using the low-frequency Besov seminorm,
$$
\sum_{j\leq J}
\|\delta_h\Lambda^{-1}\dot\Delta_j\theta\|_{L^3_\lambda}
\leq
C |h|
\sum_{j\leq J}
2^{s j}
\|\theta\|^{\mathrm{low}}_{\dot B^{-s}_{3,\infty}}
\leq
C |h|^{1-s}
\|\theta\|^{\mathrm{low}}_{\dot B^{-s}_{3,\infty}}.
$$
For $J<j\leq0$,
$$
\sum_{J<j\leq0}
\|\delta_h\Lambda^{-1}\dot\Delta_j\theta\|_{L^3_\lambda}
\leq
C
\sum_{J<j\leq0}
2^{-j}2^{s j}
\|\theta\|^{\mathrm{low}}_{\dot B^{-s}_{3,\infty}}
\leq
C |h|^{1-s}
\|\theta\|^{\mathrm{low}}_{\dot B^{-s}_{3,\infty}}.
$$
For $j>0$, using the $L^3$ boundedness of Littlewood--Paley projections,
$$
\sum_{j>0}
\|\delta_h\Lambda^{-1}\dot\Delta_j\theta\|_{L^3_\lambda}
\leq
C
\sum_{j>0}
2^{-j}
\|\dot\Delta_j\theta\|_{L^3_\lambda}
\leq
C\|\theta\|_{L^3_\lambda}.
$$
Since $|h|\geq1$ and $0<s<1$, this last term is bounded by
$C|h|^{1-s}\|\theta\|_{L^3_\lambda}$. Combining the three estimates gives the
claim.
\end{proof}

\begin{proof}[Proof of Theorem \ref{Onsager II}]

By the definition of $S_H(\ell)$ and Hölder's inequality,
$$
|S_H(\ell)|
\leq
C\mathbb E
\int_{\mathbb S^1}
\|\theta\|_{L^3_\lambda}
\|\psi(x+\ell\hat n)
-\psi(x-\ell\hat n)\|_{L^3_\lambda}
\|u\|_{L^3_\lambda}
\,dS(\hat n).
$$
Since $u=
R^\perp\theta$ and the Riesz transforms are bounded on
$L^3_\lambda$,
$$
\|u\|_{L^3_\lambda}
\leq
C\|\theta\|_{L^3_\lambda}.
$$
Moreover,
$$
\psi(x+\ell\hat n)
-\psi(x-\ell\hat n)
=
\delta_{2\ell\hat n}\psi(x-\ell\hat n).
$$
Applying the preceding 
lemma with $|h|=2\ell$ gives, for $\ell\geq1$,
$$
\|\psi(x+\ell\hat n)
-\psi(x-\ell\hat n)\|_{L^3_\lambda}
\leq
C\ell^{1-s}
\left(
\|\theta\|_{L^3_\lambda}
+
\|\theta\|^{\mathrm{low}}_
{\dot B^{-s}_{3,\infty}}
\right).
$$
Therefore,
$$
|S_H(\ell)|
\leq
C\ell^{1-s}
\mathbb E
\left[
\|\theta\|^2_{L^3_\lambda}
\left(
\|\theta\|_{L^3_\lambda}
+
\|\theta\|^{\mathrm{low}}_
{\dot B^{-s}_{3,\infty}}
\right)
\right].
$$
By the Besov assumption, there exists $C_s<\infty$ such that
$$
|S_H(\ell)|
\leq
C_s \ell^{1-s}
$$
uniformly in $\nu,\alpha$. Hence
$$
\sup_{\ell\in[\ell_I,\lambda)}
\left|
\frac{S_H(\ell)}{\ell}
\right|
\leq
C_s
\sup_{\ell\in[\ell_I,\lambda)}
\ell^{-s}
\leq
C_s \ell_I^{-s},\ 0<s<1.
$$
Taking $\limsup_{\nu,\alpha\to0}$ 
and then letting $\ell_I\to\infty$, we obtain
$$
\lim_{\ell_I\to\infty}
\limsup_{\nu,\alpha\to0}
\sup_{\ell\in[\ell_I,\lambda)}
\left|
\frac{S_H(\ell)}{\ell}
\right|
=0.
$$
This completes the proof.
\end{proof}

\appendix
\section{Proof of Proposition \ref{Prop2.4}}\label{app:main}

Here we give the proof of Proposition \ref{Prop2.4}.
\begin{proof}
Let
$
X(t):=\norm{\theta(t)}_{L^2_\lambda}^2.
$
We note that all 
applications of Itô's 
formula below can be
justified by 
a standard smooth
truncation argument
and a subsequent passage to the limit.

Applying Itô’s formula to $X(t)$ and using the cancellation
$$
\ip{u\cdot\nabla\theta}{\theta}_{\lambda}=0,
$$
we obtain
$$
dX(t)+2\left\langle \theta,\mathfrak{B}_{\nu,\alpha}\theta\right\rangle _{\lambda}\dd t
=
2\ep\,\dd t
+
2\sum_{n\in\mathbb N}\left\langle \theta,g_n^\lambda\right\rangle_{\lambda}\dd W_n(t),
$$
where we denote that
$\mathfrak{B}_{\nu,\alpha}:=\nu\La^2+\alpha\La^{-2\gamma},$
$\ip{f}{g}_{\lambda}:=\fint_{\T_\lambda^2} f(x)g(x)\,dx
$ is the normalized inner product,
and recall that
$
\ep=\frac12\sum_{n\in\mathbb N}\norm{g_n^\lambda}_{L^2_\lambda}^2
$.

\noindent By stationarity, taking 
expectation gives the stationary SPE balance
$$
\nu\E\norm{\La\theta}_{L^2_\lambda}^2
+
\alpha\E\norm{\La^{-\gamma}\theta}_{L^2_\lambda}^2
=
\ep.
$$
In particular,
$$
\E\norm{\La\theta}_{L^2_\lambda}^2<\infty.
$$

We next prove a fourth moment bound in $L^2_\lambda$. Since the functions have zero
spatial mean, the operator $\mathfrak{B}_{\nu,\alpha}$ is coercive on the zero-mean subspace. More
precisely, for fixed $\nu,\alpha,\lambda$,
$$
\left\langle f,\mathfrak{B}_{\nu,\alpha}f \right\rangle _{\lambda}
\ge
c_{\nu,\alpha,\lambda}\norm{f}_{L^2_\lambda}^2,
\qquad
c_{\nu,\alpha,\lambda}:=
\inf_{k\in\mathbb Z_\lambda^2\setminus\{0\}}
\br{\nu |k|^2+\alpha |k|^{-2\gamma}}>0.
$$
Applying Itô’s formula to $X(t)^2$, we get
$$
dX(t)^2
=
2X(t)\,dX(t)+d\langle X,X\rangle_t.
$$
Moreover,
$$
d\langle X,X\rangle_t
=
4\sum_{n\in\mathbb N}\left\langle \theta,g_n^\lambda\right\rangle_{\lambda}^2\dd t
\le
4X(t)\sum_{n\in\mathbb N}\norm{g_n^\lambda}_{L^2_\lambda}^2\dd t
=
8\ep X(t)\dd t.
$$
Therefore, by stationarity,
$$
4c_{\nu,\alpha,\lambda}\E X^2
\le
12\ep \E X.
$$
Since the coercivity and the stationary energy balance also imply
$
c_{\nu,\alpha,\lambda}\E X\le \ep,
$
we conclude that
$$
\E\norm{\theta}_{L^2_\lambda}^4=\E X^2<\infty.
$$

By the two-dimensional Gagliardo--Nirenberg inequality on the fixed torus
$\T_\lambda^2$,
$$
\norm{\theta}_{L^3_\lambda}^3
\lesssim_\lambda
\norm{\theta}_{L^2_\lambda}^2
\norm{\La\theta}_{L^2_\lambda}.
$$
Hence, by the Cauchy--Schwarz inequality,
$$
\E\norm{\theta}_{L^3_\lambda}^3
\lesssim_\lambda
\br{\E\norm{\theta}_{L^2_\lambda}^4}^{1/2}
\br{\E\norm{\La\theta}_{L^2_\lambda}^2}^{1/2}
<\infty.
$$

It remains to check the finiteness of the two 
structure functions appearing in Theorem \ref{Thm}.
Since $u=R^\perp\theta$ and
the Riesz transforms are bounded on $L^3$, we have
$$
\begin{aligned}
\fint_{\T_\lambda^2}
|\delta_h\theta(x)|^2|\delta_hu(x)|\dx
&\le
\norm{\delta_h\theta}_{L^3_\lambda}^2
\norm{\delta_hu}_{L^3_\lambda}  \\
&\lesssim
\norm{\theta}_{L^3_\lambda}^2\norm{u}_{L^3_\lambda} \\
&\lesssim
\norm{\theta}_{L^3_\lambda}^3.
\end{aligned}
$$
Taking expectations gives the finiteness of $S_E(\ell)$.

Similarly, since $\psi=\La^{-1}\theta$ and $\La^{-1}$ is bounded from the zero-mean
$L^3$ space into itself on the fixed torus $\T_\lambda^2$,
$$
\norm{\psi}_{L^3_\lambda}\lesssim_\lambda \norm{\theta}_{L^3_\lambda}.
$$
Therefore,
$$
\begin{aligned}
&\fint_{\T_\lambda^2}
|\theta(x)|\,|\psi(x+h)-\psi(x-h)|\,|u(x)|\dx  \\
&\qquad\le
\norm{\theta}_{L^3_\lambda}
\norm{\psi(\cdot+h)-\psi(\cdot-h)}_{L^3_\lambda}
\norm{u}_{L^3_\lambda} \\
&\qquad\lesssim_\lambda
\norm{\theta}_{L^3_\lambda}^3.
\end{aligned}
$$
Taking expectations gives the finiteness of $S_H(\ell)$. 
This completes the proof.
\end{proof}

\medskip

\section*{Declarations}

\noindent\textbf{Data Availability} 
No data has been produced in the original research reported in this manuscript.

\medskip

\noindent\textbf{Conflict of interest} 
The authors have no conflict of interest 
to declare that are relevant to the content of this
article.

\medskip

\bibliographystyle{plain}
\bibliography{ref}

\end{document}